\documentclass[11pt]{article}
\usepackage{amsmath,amssymb,amsthm,bbm,natbib,color,hyperref}
\usepackage[margin=1.2in]{geometry}
\usepackage{caption}
\captionsetup[figure]{font=small}
\usepackage{graphicx}
\usepackage{subcaption}
\usepackage{afterpage}
\usepackage[hang,flushmargin]{footmisc}

\newtheorem{theorem}{Theorem}
\newtheorem{lemma}{Lemma}
\newtheorem{proposition}{Proposition}

\newtheorem{definition}{Definition}

\allowdisplaybreaks

\newcommand{\R}{\mathbb{R}}
\newcommand{\eps}{\epsilon}
\newcommand{\EE}[1]{\mathbb{E}\left[{#1}\right]}
\newcommand{\EEst}[2]{\mathbb{E}\left[{#1}\  \middle| \ {#2}\right]}
\newcommand{\Ep}[2]{\mathbb{E}_{{#1}}\left[{#2}\right]}

\newcommand{\PP}[1]{\mathbb{P}\left\{{#1}\right\}}

\newcommand{\Pp}[2]{\mathbb{P}_{{#1}}\left\{{#2}\right\}}
\newcommand{\var}[1]{\textnormal{Var}\left({#1}\right)}
\newcommand{\varp}[2]{\textnormal{Var}_{{#1}}\left({#2}\right)}
\newcommand{\varst}[2]{\textnormal{Var}\left({#1}\  \middle| \ {#2}\right)}
\newcommand{\cov}[2]{\textnormal{Cov}\left({#1},{#2}\right)}

\newcommand{\One}[1]{{\mathbbm{1}}\left\{{#1}\right\}}

\newcommand{\iidsim}{\stackrel{\textnormal{iid}}{\sim}}
\newcommand{\dtv}{\textnormal{d}_{\textnormal{TV}}}
\newcommand{\dkl}{\textnormal{d}_{\textnormal{KL}}}

\newcommand{\Ch}{\widehat{C}_n}
\newcommand{\Xcal}{\mathcal{X}}

\title{Distribution-free inference for regression: \\ discrete, continuous, and in between}
\author{Yonghoon Lee and Rina Foygel Barber}
\date{}

\begin{document}
\maketitle

\begin{abstract}
  In data analysis problems where we are not able to rely on distributional assumptions, what types of inference guarantees can still be obtained? Many popular methods, such as holdout methods, cross-validation methods, and conformal prediction, are able to provide distribution-free guarantees for predictive inference, but the problem of providing inference for the underlying regression function (for example, inference on the conditional mean $\EE{Y|X}$) is more challenging. In the setting where the features $X$ are continuously distributed, recent work has established that any confidence interval for $\EE{Y|X}$ must have non-vanishing width, even as sample size tends to infinity. At the other extreme, if $X$ takes only a small number of possible values, then inference on $\EE{Y|X}$ is trivial to achieve. In this work, we study the problem in settings in between these two extremes. We find that there are several distinct regimes in between the finite setting and the continuous setting, where vanishing-width confidence intervals are achievable if and only if the effective support size of the distribution of $X$ is smaller than the square of the sample size.
\end{abstract}

\section{Introduction}

Consider a regression problem, where our aim is to model the distribution of a response variable $Y\in\R$ based on the information carried by features $X\in\Xcal$. Given training data $(X_1,Y_1),\dots,(X_n,Y_n)$, we aim to build a fitted model to estimate the conditional distribution of $Y\mid X$, or some summary of this distribution such as the conditional mean or conditional median. In this type of setting, our goals are to simultaneously perform two tasks, estimation and inference---that is, we want to accurately estimate the conditional distribution, and we also want a reliable way of quantifying our uncertainty about this estimate. 

To make this concrete, suppose the training data $\{(X_i,Y_i)\}$ are drawn i.i.d.~from some unknown distribution $P$ on $\R^d\times \R$, and we want to estimate the true conditional mean, $\mu_P(x):=\EE{Y|X=x}$, of this distribution. Given the training data, we construct a fitted regression function $\widehat{\mu}:\R^d\rightarrow\R$ using any algorithm, for instance, a parametric method such as least squares or a nonparametric procedure such as a Gaussian kernel method. For many regression algorithms, assuming certain conditions on the underlying distribution $P$ will ensure an accurate estimate of $\mu_P$; however,
unless we are able to verify these assumptions, we cannot be confident that the corresponding error rates will indeed lead to a valid confidence interval for  $\mu_P$. The goal of {\em distribution-free inference} is to provide inference guarantees---in this case, confidence intervals for $\mu_P(X_{n+1})$ at a newly observed feature vector $X_{n+1}$---that are valid universally over any underlying distribution $P$.

\subsection{Our contributions}
In this work, we study the problem of constructing a confidence interval $\Ch(x)$ for $\mu_P(x)$, that satisfies the following property:
\begin{definition}\label{def:DF_CI}
An algorithm $\Ch$ provides a distribution-free $(1-\alpha)$-confidence interval  for the conditional mean if it holds that
\[\Pp{(X_i,Y_i)\iidsim P}{\mu_P(X_{n+1})\in\Ch(X_{n+1})}\geq 1-\alpha\textnormal{ for all distributions $P$ on $(X,Y)\in\R^d\times[0,1]$}.\]
\end{definition}
\noindent Here the probability is taken with respect to the distribution of both the training data $(X_1,Y_1),\dots,(X_n,Y_n)$ and the test point $(X_{n+1},Y_{n+1})$, all drawn i.i.d.~from an arbitrary 
$P$.\footnote{
In this definition and throughout our work, $\Ch$ can be either a deterministic or randomized function of the training data;
if the construction is randomized then the definition above should be interpreted as computing probability with respect
to the distribution of the data and the randomization of the construction.}

Recent work by~\citet{vovk2005algorithmic,barber2020distribution,gupta2020distribution}  (studying the conditional mean of a binary response $Y$)
and by \citet{medarametla2021distribution} (studying the conditional median of a real-valued $Y$)
 proves that distribution-free coverage properties similar to Definition~\ref{def:DF_CI}
lead to fundamental limits on the accuracy of inference. Writing $P_X$ to denote the marginal distribution of $X$ under $P$,
these results show that if $P_X$ is {\em nonatomic}  (meaning that there are no point masses,
i.e., $\Pp{P_X}{X=x}=0$ for all points $x\in\R^d$), then any distribution-free confidence interval
 $\Ch$ cannot have vanishing length as sample size $n$ tends to infinity, regardless of the smoothness of $P$, or any other ``nice'' properties of this distribution.
Specifically, these works show that if $P_X$ is nonatomic, then $\Ch$ must also 
be a valid predictive interval, i.e., must contain $Y_{n+1}$ itself with probability $\geq 1-\alpha$.
This implies that the length of $\Ch$ cannot be vanishing, since $Y_{n+1}$ is inherently noisy.
 An explicit lower bound on the length is proved in \citet{barber2020distribution}.

Our new results examine the possibility of constructing confidence intervals $\Ch$ that are both distribution-free (Definition~\ref{def:DF_CI})
and have vanishing length, when $P_X$ may be discrete, nonatomic, or a mixture of the two.
We find that the hardness of this problem can be characterized by the {\em effective support size} of $P_X$---essentially, how many points $x\in\R^d$
are needed to capture most of the mass of $P_X$ (for example, if $P_X$ is uniform over $M$ points, then its effective support size is $\leq M$).

Our main theoretical results show that there are two regimes. If the effective support size is $\gg n^2$, then $P_X$ essentially behaves like a nonatomic distribution because in a sample of size $n$,
with high probability all the $X$ values are observed at most once; in this regime, we find that the average length of $\Ch(X_{n+1})$ is bounded away from zero,
i.e., no distribution-free confidence interval can have vanishing length. If instead the effective support size is $\ll n^2$, then it becomes possible for $\Ch(X_{n+1})$ to have vanishing length,
and in particular, the minimum possible length scales as $\frac{M^{1/4}}{n^{1/2}}$ for effective support size $M$. Interestingly, vanishing length is possible even when $M$ is larger than $n$,
meaning that  distribution-free inference for $\EE{Y|X}$ is possible even if most $X$ values were never observed in the training set.

\subsection{Additional related work}

The problem of distribution-free inference has been studied extensively in the context of {\em predictive inference}, where the goal
is to provide a confidence band for the response value $Y_{n+1}$ given a new feature vector $X_{n+1}$. The prediction problem is fundamentally different from the goal of covering the conditional mean. In particular, by splitting the data and using a holdout set, we
 can always empirically validate the coverage level of any constructed
predictive band. Methods such as conformal prediction (see, e.g., \citet{vovk2005algorithmic,papadopoulos2002inductive,lei2018distribution,vovk2018cross}) or jackknife+ \citep{barber2021predictive,kim2020predictive} can ensure valid distribution-free predictive inference
without the need to split the data set (thus avoiding reducing the sample size).

As mentioned earlier, \citet{vovk2005algorithmic,barber2020distribution,gupta2020distribution,medarametla2021distribution} 
also study the problem of confidence intervals for the conditional mean or median of $Y|X$, establishing impossibility
results on the setting of a nonatomic $P_X$.
 These results
are connected to earlier results on the impossibility of adaptation to smoothness, in the nonparametric inference literature---specifically,
 if $\mu_P$ is $\beta$-H{\"o}lder smooth, then it is possible to build a confidence interval of length $\mathcal{O}(n^{-\frac{\beta}{2\beta+d}})$
 if $\beta$ is known (e.g., using $k$-nearest-neighbors with an appropriately chosen $k$), but this cannot be achieved when $\beta$ is unknown
(see, e.g., \citet[Section 8.3]{gine2016mathematical} for an overview of results of this type).

While the results above establish the challenges for distribution-free inference when the features $X$ are nonatomic,
at the other extreme we can consider scenarios where $X$ has a discrete distribution. In this setting,
the problem of estimating $\mu_P$ is related to the {\em discrete distribution testing}, where the aim is to test properties
of a discrete distribution---for instance, we might wish to
 test equality of two distributions where we draw samples from each \citep{chan2014optimal,
acharya2014chasm,
diakonikolas2016new,
canonne2015testing};
to test whether a sample is drawn from a known distribution $P$ or not \citep{diakonikolas2016new,
acharya2015optimal,
valiant2017automatic,
diakonikolas2018sample}, or drawn from any distribution belonging to
 a class $\mathcal{P}$ or not \citep{acharya2015optimal,canonne2018testing}; or to estimate certain characteristics
of a distribution such as its entropy or  support size \citep{valiant2011power,valiant2011estimating,acharya2014chasm}. The distribution-free confidence intervals we will construct in Section~\ref{sec:upperbd} are closely related to methods developed in this literature.

\section{Main results: lower bound}

Before presenting our main result, we begin with several definitions.
For any distribution $P_X$ on $X\in\R^d$,
we first define the {\em effective support size} of $P_X$ at tolerance level $\gamma\in[0,1)$:
\[M_\gamma(P_X) = \min\left\{|\Xcal| \, : \, \Xcal\subset\R^d\textnormal{ and }\Pp{P_X}{X\in \Xcal} \geq 1-\gamma\right\},\]
where $|\Xcal|$ denotes the cardinality of the set $\Xcal$.
In particular, if $P_X$ is a distribution supported on $M$ points, then $M_\gamma(P_X) \leq M$ for any $\gamma$.
If instead $P_X$ is nonatomic, then  $M_\gamma(P_X)  = \infty$ for all $\gamma>0$.
We note that, in many practical settings, the effective support size $M_\gamma(P_X)$ may be substantially smaller than the overall support size. For
example, if $X\in\R^d$ measures $d$ categorical covariates with $m_j$ possible values for the $j$th covariate,
then the support of $P_X$ is potentially as large as $\prod_{j=1}^d m_j$, which will grow extremely rapidly with the dimension $d$ even if each $m_j$
is small; in real data, however, it may be the case that most combinations of covariate values are extremely unlikely, and so the effective
support size $M_\gamma(P)$ would be substantially smaller, and might grow more slowly with $d$.

Next, for any distribution $P$ on $(X,Y)\in\R^d\times[0,1]$, we define
\[\sigma^2_{P,\beta} = \textnormal{ the $\beta$-quantile of $\varp{P}{Y|X}$, under the distribution $X\sim P_X$.}\]

With these definitions in place, our first main result establishes a lower bound on the expected length of any distribution-free confidence interval $\Ch$.
 Let $\textnormal{Leb}$ denote the Lebesgue measure on $\R$.
 \begin{theorem}\label{thm:lowerbd}Fix any $\alpha>0$, and let $\Ch$ be a distribution-free $(1-\alpha)$-confidence interval (i.e., satisfying Definition~\ref{def:DF_CI}). Then
for any distribution $P$ on $\R^d\times\R$, for any $\beta>0$ and $\gamma>\alpha+\beta$,
\[\EE{\textnormal{Leb}\left(\Ch(X_{n+1})\right)}
\geq \tfrac{1}{3}\sigma^2_{P,\beta}(\gamma-\alpha-\beta)^{1.5}\cdot\min\left\{\frac{\big(M_\gamma(P_X)\big)^{1/4}}{n^{1/2}}, 1\right\},\]
where the expected value is taken over data points $(X_i,Y_i)\iidsim P$, for $i=1,\dots,n+1$.
\end{theorem}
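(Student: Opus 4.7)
The plan is a Bayesian two-point argument using a Rademacher mixture of per-atom mean shifts, with the factor $M^{1/4}$ coming from a second-order cancellation in the likelihood ratio. First, use the two quantile conditions to pick $\Xcal^*\subseteq\R^d$ with $|\Xcal^*|\leq M_\gamma(P_X)$, $\Pp{P_X}{X\in\Xcal^*}\geq 1-\gamma-\beta$, and $\varp{P}{Y\mid X=x}\geq \sigma^2_{P,\beta}$ for every $x\in\Xcal^*$. For each $x\in\Xcal^*$ and each sign $s\in\{\pm 1\}$ build a valid law $P^{(x,s)}(Y\mid X=x)$ supported in $[0,1]$ with mean $\mu_P(x)+s\delta$; a clipped exponential tilt of $P(Y\mid X=x)$ works provided $\delta\lesssim\sigma^2_{P,\beta}$, which is where the factor $\sigma^2_{P,\beta}$ in the theorem ultimately enters.

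For $\epsilon=(\epsilon_x)_{x\in\Xcal^*}$ i.i.d.\ uniform on $\{\pm 1\}$, let $P_\epsilon$ apply the $\epsilon_x$-perturbation at every $x\in\Xcal^*$ and equal $P$ elsewhere. Applying Definition~\ref{def:DF_CI} to each $P_\epsilon$ and averaging over $\epsilon$ gives
\[\PP{\Ch(X_{n+1})\ni\mu_{P_\epsilon}(X_{n+1})}\geq 1-\alpha\]
under the Bayes law with $\epsilon$ random and $(X_i,Y_i)\iidsim P_\epsilon$. The crux is a $\chi^2$-bound on the Bayes marginal of the data against $P^{\otimes(n+1)}$. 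Because the $\pm\delta$ shift averages to zero over $\epsilon_x$, the first-order terms in the likelihood ratio cancel and single-sample marginals coincide exactly with $P$. Correlation survives only between two samples landing at the same atom: the product of their likelihood ratios averaged over the common $\epsilon_x$ is of order $\delta^2(Y_i-\mu_P(x))(Y_j-\mu_P(x))/\sigma^4_{P,\beta}$, contributing $\delta^4/\sigma^4_{P,\beta}$ to $\chi^2$ per within-atom coincident pair. Summing and using $\EE{\sum_{x\in\Xcal^*}k_x(k_x-1)}\leq n^2/M_\gamma(P_X)$ (where $k_x$ is the number of training points at $x$) yields $\chi^2(\bar P^n\|P^n)\lesssim n^2\delta^4/(M_\gamma(P_X)\,\sigma^4_{P,\beta})$, hence $\dtv(\bar P^n,P^n)\lesssim n\delta^2/(\sigma^2_{P,\beta}\sqrt{M_\gamma(P_X)})$.

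On the event $E=\{X_{n+1}\in\Xcal^*\}\cap\{X_{n+1}\text{ appears at most once in }X_1,\ldots,X_n\}$, the posterior of $\epsilon_{X_{n+1}}$ is still its uniform $\{\pm 1\}$ prior, so the Bayes coverage requirement forces $\Ch(X_{n+1})$ to contain both $\mu_P(X_{n+1})+\delta$ and $\mu_P(X_{n+1})-\delta$ with probability at least $\Pp{}{E}-2\alpha-2\,\dtv$, giving $\textnormal{Leb}(\Ch(X_{n+1}))\geq 2\delta$ there. A routine Binomial calculation bounds $\Pp{}{E}$ below by an absolute constant times $(1-\gamma-\beta)$ whenever $M_\gamma(P_X)\geq n$ (and otherwise the $\min\{\cdot,1\}$ in the theorem is saturated). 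Combining, $\EE{\textnormal{Leb}(\Ch(X_{n+1}))}\gtrsim \delta\bigl((\gamma-\alpha-\beta)-c\cdot n\delta^2/(\sigma^2_{P,\beta}\sqrt{M_\gamma(P_X)})\bigr)$, and extremizing over $\delta\in(0,\sigma^2_{P,\beta}]$ produces the $\tfrac13\sigma^2_{P,\beta}(\gamma-\alpha-\beta)^{3/2}\min\{M_\gamma(P_X)^{1/4}/n^{1/2},1\}$ rate: the exponent $3/2$ on $\gamma-\alpha-\beta$ arises from balancing the linear gain in $\delta$ against the quadratic-in-$\delta$ cost of $\dtv$, while the saturated $\chi^2$ yields the $M_\gamma(P_X)^{1/4}/n^{1/2}$ factor. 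The main obstacle is the $\chi^2$ estimate itself: the naive single-sample bound is $\chi^2\lesssim n\delta^2/\sigma^2_{P,\beta}$ with no $M$-dependence, and extracting $M^{1/4}$ rests on the twin observations that the Rademacher mixture kills the first-order likelihood term and that the number of within-atom coincident training pairs is only $n^2/M$ in expectation.
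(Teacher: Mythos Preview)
Your overall architecture matches the paper's---a Rademacher mixture of mean-shifted conditionals, an information bound exploiting second-order cancellation, then optimization over the shift size---but the step you correctly flag as ``the main obstacle'' has a genuine gap. You claim $\EE{\sum_{x\in\Xcal^*}k_x(k_x-1)}\leq n^2/M_\gamma(P_X)$; since $\EE{k_x(k_x-1)}=n(n-1)p_x^2$, this requires $\sum_{x\in\Xcal^*}p_x^2$ to be of order $1/M_\gamma(P_X)$, which fails whenever $\Xcal^*$ contains a heavy atom. For instance, if $P_X$ puts mass $1/2$ on one point and spreads the remaining $1/2$ uniformly over $K$ further points, then $M_\gamma(P_X)$ is of order $K$, but any $\Xcal^*$ with $\Pp{P_X}{X\in\Xcal^*}\geq 1-\gamma-\beta$ must include the heavy atom, giving $\sum_{x\in\Xcal^*} p_x^2\geq 1/4$ and a $\chi^2$ bound with no $M$-dependence. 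The same heavy atom also breaks your lower bound on $\PP{E}$: the probability that $X_{n+1}$ equals that atom yet is unseen in training is exponentially small in $n$. The paper's fix is to perturb in the \emph{opposite} region: it sets $\Xcal_1=\{x:\Pp{P_X}{X=x}>1/M_\gamma(P_X)\}$, notes that $|\Xcal_1|<M_\gamma(P_X)$ forces $\Pp{P_X}{X\in\Xcal_1}<1-\gamma$, and partitions $\R^d\setminus\Xcal_1$ into sets $\Xcal_2,\Xcal_3,\ldots$ each of $P_X$-mass at most $1/M_\gamma(P_X)$ (this also handles nonatomic $P_X$). The Rademacher sign is attached per set, and coverage is invoked on $\{X_{n+1}\notin\Xcal_1\}$, an event of probability exceeding $\gamma$; this is where the factor $\gamma-\alpha$ enters.

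Two smaller issues. First, a clipped exponential tilt by $\pm\theta$ does not average back to $P_{Y|X=x}$, so your claim that ``single-sample marginals coincide exactly with $P$'' fails for that construction; the paper instead writes $P_{Y|X=x}=\tfrac12P^0+\tfrac12P^1$ via a median split (Lemma~\ref{lem:median_split}, giving mean gap $\geq 2\sigma^2_P(x)$) and tilts the mixture weights to $\tfrac12\pm\eps$, which averages to $P$ exactly and reduces the information bound to a clean Binomial KL computation. Second, the paper does not use a posterior ``both endpoints contained'' argument; it shows that for each $\eps$ the event $\{\mu_P(X_{n+1})\pm\eps\Delta(X_{n+1})\}\cap\Ch(X_{n+1})\neq\emptyset$ holds with probability at least $\gamma-\alpha-2\eps^2 n/\sqrt{M_\gamma(P_X)}$ under $P$, and then integrates this over $\eps\in[0,\eps_0]$ to lower-bound $\textnormal{Leb}(\Ch(X_{n+1}))$ directly---this sidesteps any need to control whether $X_{n+1}$ was seen in training.
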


\subsection{Special cases}
To help interpret this result, we now examine its implications in several special cases.
\paragraph{Uniform discrete features}
If $P_X$ is a uniform distribution over $M$ points, then for any $\gamma>0$ the effective support size is $M_\gamma(P_X) = \lceil (1-\gamma) M\rceil $.
Therefore,
Theorem~\ref{thm:lowerbd} implies that for any $P$ with nonatomic marginal $P_X$,
\[\EE{\textnormal{Leb}\left(\Ch(X_{n+1})\right)}
\geq \tfrac{1}{3}\sigma^2_{P,\beta}(\gamma-\alpha-\beta)^{1.5}(1-\gamma)^{0.25}\cdot\min\left\{\frac{M^{1/4}}{n^{1/2}}, 1\right\}\]
for any $\beta\in(0,\gamma-\alpha)$. In particular, we see that $M \gg n^2$ implies a {\em constant} lower bound on the width of any distribution-free confidence interval,
while $M\ll n^2$ allows for the possibility of a {\em vanishing} width for a distribution-free confidence interval.
\paragraph{Binary response}
If the response $Y$ is known to be binary (i.e., $Y\in\{0,1\}$), we might relax the requirement of distribution-free coverage
to only include distributions of this type, i.e., we require 
\begin{equation}\label{eqn:DF_CI_binary}\Pp{(X_i,Y_i)\iidsim P}{\mu_P(X_{n+1})\in\Ch(X_{n+1})}\geq 1-\alpha\textnormal{ for all distributions $P$ on $\R^d\times\{0,1\}$}.\end{equation}
This condition is strictly weaker than Definition~\ref{def:DF_CI}, where the coverage property is required to hold for 
all distributions $P$ on $\R^d\times[0,1]$, i.e., for a broader class of distributions.
However, it turns out that relaxing the requirement does not improve the lower bound. Specifically, if we have an algorithm
to construct a confidence interval $\Ch$ satisfying~\eqref{eqn:DF_CI_binary}, then
we can easily convert $\Ch$ into a method that does satisfy Definition~\ref{def:DF_CI}. Given data $(X_1,Y_1),\dots,(X_n,Y_n)$, for each $i=1,\dots,n$
draw a binary response $\tilde{Y}_i \sim\textnormal{Bernoulli}(Y_i)$. Then we clearly have $n$ i.i.d.~draws from a distribution
on $(X,\tilde{Y})\in\R^d\times\{0,1\}$, where $\mathbb{E}[\tilde{Y}\mid X] =\EEst{Y}{X} = \mu_P(X)$. After running our algorithm to construct $\Ch$
on the new data $(X_1,\tilde{Y}_1),\dots,(X_n,\tilde{Y}_n)$, the binary distribution-free coverage property~\eqref{eqn:DF_CI_binary}
satisfied by $\Ch$ ensures that this modified procedure satisfies Definition~\ref{def:DF_CI}.

To summarize, then, we see that the problem of distribution-free coverage is equally hard for the binary response case ($Y\in\{0,1\}$) as 
for the more general bounded response case ($Y\in[0,1]$).

\paragraph{Nonatomic features}
We now consider the setting where the marginal distribution of $X$ is nonatomic, i.e., $\Pp{P_X}{X=x}=0$ for all $x$. (In particular,
this includes the continuous case, where $X$ has a continuous distribution on $\R^d$.) In this case, for any $\gamma>0$ the effective support size is $M_\gamma(P_X) = \infty$. Therefore,
Theorem~\ref{thm:lowerbd} implies that for any $P$ with nonatomic marginal $P_X$, for any $\beta\in(0,1-\alpha)$,
\[\EE{\textnormal{Leb}\left(\Ch(X_{n+1})\right)}
\geq \tfrac{1}{3}\sigma^2_{P,\beta}(1-\alpha-\beta)^{1.5}.\]
 In particular, this lower bound does not depend on $n$, and so the width of any distribution-free confidence interval 
is non-vanishing even for arbitrarily large sample size $n$ (as long as $\sigma^2_{P,\beta}>0$).

In case of a binary response, where $P$ is a distribution on $\R^d\times\{0,1\}$ with nonatomic marginal distribution $P_X$, \citet{barber2020distribution} establishes that any distribution-free confidence interval for $\mu$ must satisfy a lower bound that is a function only of $P$ and does not depend on $n$ (and, in particular, does not vanish as $n\rightarrow\infty$). In this sense, our new result can be viewed as a generalization of this work, since the nonvanishing
minimum length for nonatomic $P_X$ is a consequence of our result.

\subsection{Adding knowledge of $P_X$}
One way we might try to weaken the notion of distribution-free coverage would be to allow assumptions about
the marginal distribution $P_X$, while remaining assumption-free for the function $\mu_P$ determining the  conditional mean.
In other words, we might weaken Definition~\ref{def:DF_CI} to require coverage over all distributions $P$ for which $P_X=P^*_X$,
for a known  $P^*_X$ (or, all $P$ for which $P_X$ satisfies some assumed property).
Interestingly, the lower bound in Theorem~\ref{thm:lowerbd} remains the same even under this  milder definition of validity---we will
see in the proof that knowledge of $P_X$ does not affect the lower bound, since the argument relies only on our uncertainty about the 
conditional distribution of $Y|X$.

\subsection{Bounded or unbounded?}\label{sec:why_bounded}
The lower bound established in Theorem~\ref{thm:lowerbd}
assumes distribution-free coverage for distributions with a bounded response $Y$---that is, Definition~\ref{def:DF_CI}
requires coverage to hold for distributions where the response $Y$ is supported on $[0,1]$ (although no other assumptions are placed on $P$).
Would it be possible for us to instead consider the general case, where $P$ is an unknown distribution on $\R^d\times \R$? The following result shows that this more general question is not meaningful:
\begin{proposition}\label{prop:why_bounded}
Suppose an algorithm $\Ch$ satisfies
\[\Pp{(X_i,Y_i)\iidsim P}{\mu_P(X_{n+1})\in\Ch(X_{n+1})}\geq 1-\alpha\textnormal{ for all distributions $P$ on $\R^d\times\R$}.\]
Then for all distributions $P$, for all $y\in\R$ it holds that 
\[\Pp{(X_i,Y_i)\iidsim P}{y\in \Ch(X_{n+1}))} \geq 1-\alpha.\]
\end{proposition}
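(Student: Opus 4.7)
The plan is a coupling argument. Given any $P$ and any target value $y$, I will build a nearby distribution $P'_\eps$ whose conditional mean is identically equal to $y$, apply the hypothesized distribution-free coverage to $P'_\eps$ (which then becomes a statement about coverage of $y$), and transfer the bound back to $P$ using a total-variation estimate, letting $\eps \to 0$.

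Concretely, fix any $P$ on $\R^d\times\R$ for which $\mu_P$ is well-defined $P_X$-a.s., fix $y\in\R$, and for each $\eps\in(0,1)$ define $P'_\eps$ by the following generative mechanism: draw $X\sim P_X$, draw $B\sim\textnormal{Bernoulli}(\eps)$ independently, set $Y$ from the conditional $P_{Y\mid X}$ when $B=0$, and when $B=1$ set
\[Y = \frac{y - (1-\eps)\,\mu_P(X)}{\eps}.\]
A direct computation yields $\mu_{P'_\eps}(x) = (1-\eps)\mu_P(x) + \eps\cdot\tfrac{y-(1-\eps)\mu_P(x)}{\eps} = y$ for every $x$, so under $P'_\eps$ the conditional mean function is the constant $y$. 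Moreover, $P$ and $P'_\eps$ admit an obvious coupling that agrees on the event $\{B=0\}$, so $\dtv(P,P'_\eps)\leq\eps$ and consequently $\dtv(P^{n+1},(P'_\eps)^{n+1})\leq (n+1)\eps$.

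Since $P'_\eps$ is itself a distribution on $\R^d\times\R$, the hypothesized coverage property applies and gives
\[\Pp{(P'_\eps)^{n+1}}{y\in\Ch(X_{n+1})} = \Pp{(P'_\eps)^{n+1}}{\mu_{P'_\eps}(X_{n+1})\in\Ch(X_{n+1})}\geq 1-\alpha,\]
and the TV bound then yields $\Pp{P^{n+1}}{y\in\Ch(X_{n+1})}\geq 1-\alpha-(n+1)\eps$. Sending $\eps\to 0$ finishes the argument. The only technical point is verifying that $P'_\eps$ is genuinely a valid distribution on $\R^d\times\R$---in particular that the $B=1$ branch produces a finite real value---which reduces to requiring $\mu_P$ to be finite a.s., and this is automatic whenever the event $\mu_P(X_{n+1})\in\Ch(X_{n+1})$ is meaningful at all (otherwise the hypothesis is vacuous for $P$). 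The conceptual content the argument captures is that, without an upper bound on $|Y|$, one can tilt the conditional mean by an arbitrary amount using an arbitrarily rare but arbitrarily extreme perturbation, so no amount of data can distinguish any candidate value of $\mu_P(X_{n+1})$ from any other.
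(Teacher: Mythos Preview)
Your proof is correct and is essentially identical to the paper's: both construct the $\eps$-mixture $(1-\eps)P_{Y\mid X}+\eps\,\delta_{(y-(1-\eps)\mu_P(X))/\eps}$ so that the conditional mean becomes identically $y$, apply the assumed coverage under this perturbed law, and transfer back via a total-variation bound that vanishes as $\eps\to0$. The only cosmetic difference is that the paper bounds $\dtv(P^n\times P_X,\,Q^n\times Q_X)\le n\eps$ (exploiting $Q_X=P_X$) whereas you use the slightly looser $(n{+}1)\eps$, which is immaterial in the limit.
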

This means that if we require $\Ch$ to have distribution-free coverage over distributions with {\em unbounded} response,
then inevitably, {\em every point in the real line} is contained in the resulting confidence interval a substantial portion of the time.
(In particular, $\Ch(X_{n+1})$ will of course have infinite expected width.)
Clearly an unbounded $Y$ cannot result in any meaningful distribution-free inference,
and for this reason we therefore restrict our attention to the setting where the response $Y$ takes values in $[0,1]$
(of course, these results can easily generalize to $Y\in[a,b]$ for any known $a<b$).

\section{Main results: upper bound}\label{sec:upperbd}
We next construct an algorithm that, for certain ``nice'' distributions $P$, can achieve a confidence interval length that matches the
rate of the lower bound. Our procedure requires two main ingredients as input: \begin{enumerate}
\item A hypothesized ordered support set $\{x^{(1)},x^{(2)},\dots\}\subset\R^d$ for the marginal $P_X$, and
\item A hypothesized mean function $\mu:\R^d \rightarrow[0,1]$.\end{enumerate}

One possible way of obtaining these inputs would be to use data splitting, where one portion of our data
(combined with prior knowledge if available) is used to construct a hypothesized support set and mean function, and the
second portion of the data is then used for constructing the confidence interval (note that the sample size $n$ in our construction below
refers to the size of this second part of the data, e.g., half of the total available sample size).
Any algorithm can be applied for estimating $\mu$, for example,  logistic regression, nearest neighbors regression, or a neural network. 

We emphasize that the coverage guarantee provided by our method  does not rely in any way on the accuracy
of these initial guesses---the constructed confidence interval will satisfy distribution-free validity (Definition~\ref{def:DF_CI}) 
even if these initial parameters are chosen in a completely uninformed way. In particular, while the algorithm that fits $\mu$ might be
 able to guarantee accuracy of $\mu$ under some assumptions placed on $P$,
 the validity of our inference procedure does not rely on these assumptions.
However, the length of the resulting confidence interval will be affected, since
high accuracy in these initial guesses can be expected to result in a shorter confidence interval. In particular,
 the hypothesized support set $\{x^{(1)},x^{(2)},\dots\}$ should aim to list the highest-probability values of $X$ early in the list,
 while the hypothesized mean function $\mu$ should aim to be as close to the true conditional mean $\mu_P$ as possible.
(Our theoretical results below will make these goals more precise.)

Given the hypothesized support and hypothesized mean function, to run our algorithm, we first choose parameters $\gamma,\delta>0$
satisfying $\gamma+\delta<\alpha$, and then compute the following steps.
\begin{itemize}
\item {\bf Step 1: estimate the effective support size.}
First, we compute an upper bound on the support size needed to capture $1-\gamma$ of the probability under $P_X$,
\[\widehat{M}_\gamma = \min\left\{m: \sum_{i=1}^n \One{X_i \in\{x^{(1)},\dots,x^{(m)}\}} \geq (1-\gamma) n + \sqrt{\frac{n\log(2/\delta)}{2}} \right\},\]
or $\widehat{M}_\gamma = \infty$ if there is no $m$ that satisfies the inequality. Applying the Hoeffding inequality to the $\textnormal{Binom}(n,\gamma)$ distribution, we see that $\PP{\widehat{M}_\gamma\geq M^*_\gamma(P_X)}\geq 
1-\delta/2$, where
\begin{equation}\label{eqn:Mstar}M^*_\gamma(P_X)=\min\left\{m: \Pp{P_X}{X\in\{x^{(1)},\dots,x^{(m)}\}}\geq 1-\gamma\right\}.\end{equation}
(Note that $M^*_\gamma(P_X)\geq M_\gamma(P_X)$ by definition.)
\item {\bf Step 2: estimate error at each repeated $X$ value.}
Next, 
for each $m=1,2,\dots$, let $n_m= \sum_{i=1}^n \One{X_i=x^{(m)}}$ denote the number of times $x^{(m)}$ was observed,
and let
\begin{equation}\label{eqn:define_N2}N_{\geq 2} = \sum_{m\geq 1} \One{n_m\geq 2}\end{equation}
be the number of $X$ values observed at least twice.
For each $m$ with $n_m\geq 2$,  let $\bar{y}_m = \frac{1}{n_m}\sum_{i=1}^n Y_i\cdot \One{X_i=x^{(m)}}$
and $s^2_m = \frac{1}{n_m-1}\sum_{i=1}^n (Y_i - \bar{y}_m)^2\cdot \One{X_i=x^{(m)}}$
be the sample mean and sample variance of the corresponding $Y$ values. Define
\begin{equation}\label{eqn:define_Z}Z = \sum_{\substack{m=1,2,\dots\\\textnormal{s.t. $n_m\geq 2$}}} (n_m-1)\cdot\big( (\bar{y}_m - \mu(x^{(m)}))^2 - n_m^{-1}s_m^2\big).\end{equation}
This construction is inspired by analogous statistics appearing in the literature for testing properties of discrete distributions---for instance,
the work of \citet{chan2014optimal}.
To see the intuition behind this construction, we observe that $\{Y_i : X_i = x^{(m)}\}$ is a collection of i.i.d.~observations with mean $\mu_P(x^{(m)})$.
Therefore, conditional on $n_m$ (with $n_m\geq 2$),
\[\EE{\bar{y}_m} = \mu_P(x^{(m)})\textnormal{ and } \var{\bar{y}_m} = n_m^{-1}\EE{s^2_m},\]
and therefore $\EE{(\bar{y}_m - \mu(x^{(m)}))^2 - n_m^{-1}s_m^2} = (\mu(x^{(m)})-\mu_P(x^{(m)}))^2$ is an estimate of our error
at this $X$ value.

\item {\bf Step 3: construct the confidence interval.}
 Finally, we define our confidence interval. Let
 \[\widehat{\Delta} = \sqrt{\frac{2\widehat{M}_\gamma+n}{n(n-1)}}\cdot \sqrt{4Z_+ + 8\sqrt{N_{\geq 2}/\delta}+24/\delta},\]
where $Z_+$ denotes $\max\{Z,0\}$. Then for each $x\in\R^d$, we define
\begin{equation}\label{eqn:CI_for_upperbd}\Ch(x) =  \left[ \max\left\{0,\mu(x) - \frac{\widehat\Delta}{\alpha - \delta-\gamma}\right\}, 
\min\left\{1,\mu(x) + \frac{\widehat\Delta}{\alpha - \delta-\gamma} \right\}\right].\end{equation}
\end{itemize}

We now verify that this construction yields a valid distribution-free confidence interval.
\begin{theorem}\label{thm:upperbd_valid}
The confidence interval constructed in~\eqref{eqn:CI_for_upperbd} 
is a distribution-free $(1-\alpha)$-confidence interval (i.e., $\Ch$ satisfies Definition~\ref{def:DF_CI}).
\end{theorem}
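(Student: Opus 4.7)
First I would reduce the coverage claim to an $L^2$ bound on $\mu-\mu_P$. Since $\mu,\mu_P\in[0,1]$, the clipping in~\eqref{eqn:CI_for_upperbd} never spoils coverage, so it suffices to bound the probability of $\{|\mu(X_{n+1})-\mu_P(X_{n+1})|>\widehat\Delta/(\alpha-\delta-\gamma)\}$. Setting $T:=\Ep{X\sim P_X}{(\mu(X)-\mu_P(X))^2}$ (a function of the training data alone), Markov's inequality applied conditionally on the training data gives
\[\PPst{|\mu(X_{n+1})-\mu_P(X_{n+1})|>\widehat\Delta/(\alpha-\delta-\gamma)}{\text{data}}\le (\alpha-\delta-\gamma)^2\, T/\widehat\Delta^2,\]
so it is enough to establish that $\widehat\Delta^2\ge (\alpha-\delta-\gamma)\, T$ holds with probability at least $1-\delta$ over the training data: the total miscoverage is then at most $\delta+(\alpha-\delta-\gamma)=\alpha-\gamma\le\alpha$.

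Next I would partition $T$ through the effective support. Writing $D_m=(\mu(x^{(m)})-\mu_P(x^{(m)}))^2\in[0,1]$ and $p_m=\Pp{P_X}{X=x^{(m)}}$, on the event $\mathcal{E}_1=\{\widehat M_\gamma\ge M^*_\gamma(P_X)\}$, which has probability at least $1-\delta/2$ by the Hoeffding bound noted in Step~1 of the construction, the first $\widehat M_\gamma$ listed values carry $P_X$-mass at least $1-\gamma$, whence $T\le \sum_{m=1}^{\widehat M_\gamma}p_m D_m+\gamma$ (using $(\mu-\mu_P)^2\le 1$ on the tail). The task thus reduces to showing $\widehat\Delta^2\ge(\alpha-\delta-\gamma)\bigl(\sum_{m\le \widehat M_\gamma}p_m D_m+\gamma\bigr)$ on a further event of probability at least $1-\delta/2$.

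The core step is to translate $\sum_m p_m D_m$ into $\widehat\Delta^2$ via $Z$. Using $\EEst{\bar y_m}{n_m}=\mu_P(x^{(m)})$ and $\EEst{s_m^2}{n_m}=\varst{Y}{X=x^{(m)}}$ as noted in Step~2, each summand of~\eqref{eqn:define_Z} has conditional mean $(n_m-1)D_m$, so
\[\EEst{Z}{X_1,\dots,X_n}=\sum_{m\,:\,n_m\ge 2}(n_m-1)D_m.\]
Because $Y\in[0,1]$ forces each $(\bar y_m-\mu(x^{(m)}))^2-n_m^{-1}s_m^2$ to lie in $[-2,2]$ and the $Y$-blocks across distinct $m$ are conditionally independent, a Chebyshev bound yields $|Z-\EEst{Z}{X_1,\dots,X_n}|\lesssim \sqrt{N_{\ge 2}/\delta}$ with high probability, matching the $8\sqrt{N_{\ge 2}/\delta}$ term. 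A separate binomial-concentration step exchanges $\sum_m(n_m-1)D_m$ for a multiple of $n\sum_m p_m D_m$ on well-sampled coordinates ($np_m\gtrsim 1$), at the cost of an $O(1/\delta)$ additive slack absorbed by the $24/\delta$ term; the rarely-sampled coordinates (where $\one{n_m\ge 2}=0$ is typical) contribute at most $\widehat M_\gamma/n$ using $D_m\le 1$, which is exactly what the prefactor $\frac{2\widehat M_\gamma+n}{n(n-1)}$ supplies through its $\widehat M_\gamma$ dependence.

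The main obstacle is this last concentration-and-comparison argument: $Z$ only registers signal from $m$ with $n_m\ge 2$, while $T$ must be controlled uniformly over the full effective support. Disentangling the well-sampled regime (where $n_m/n$ concentrates near $p_m$) from the rarely-sampled regime (whose mass is governed only by $\widehat M_\gamma/n$), and choosing the constants so that $4Z_+$, $8\sqrt{N_{\ge 2}/\delta}$, $24/\delta$, and the prefactor $(2\widehat M_\gamma+n)/(n(n-1))$ combine into a lower bound on $\widehat\Delta^2$ that cleanly majorizes $(\alpha-\delta-\gamma)T$ on the good event, is where the detailed book-keeping lies.
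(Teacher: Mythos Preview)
Your outline departs from the paper's argument in two places, and both create real gaps.

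First, the tail. You bound $T\le\sum_{m\le\widehat M_\gamma}p_mD_m+\gamma$ and then ask that $\widehat\Delta^2\ge(\alpha-\delta-\gamma)\bigl(\sum_m p_mD_m+\gamma\bigr)$; in particular this forces $\widehat\Delta^2\ge(\alpha-\delta-\gamma)\gamma$, a positive constant. But $\widehat\Delta^2=\frac{2\widehat M_\gamma+n}{n(n-1)}\bigl(4Z_++8\sqrt{N_{\ge2}/\delta}+24/\delta\bigr)\to 0$ as $n\to\infty$ whenever $\widehat M_\gamma$, $Z_+$, $N_{\ge2}$ stay bounded---for instance when $P_X$ has a single atom of mass $1-\gamma/2$ on which $\mu=\mu_P$, plus nonatomic remainder. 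So this inequality is simply false for large $n$. The paper never tries to absorb the tail into $\widehat\Delta$: it spends the $\gamma$ portion of the miscoverage budget directly on the event $\{X_{n+1}\notin\{x^{(1)},\dots,x^{(M^*_\gamma(P_X))}\}\}$, and only has to control error on the listed support.

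Second, even restricted to the support, your $L^2$ route needs $\widehat\Delta^2\ge(\alpha-\delta-\gamma)\sum_m p_mD_m$, and your ``rarely-sampled'' step asserts that $\sum_{np_m<1}p_mD_m\le\widehat M_\gamma/n$ is ``supplied'' by the prefactor's $\widehat M_\gamma$ dependence. But the prefactor contributes only $\widehat M_\gamma/\bigl(n(n-1)\bigr)$ times the bracket (which is $O(1/\delta)$), a factor of $n$ short of what you need. The paper sidesteps this by using an $L^1$ rather than $L^2$ Markov bound: it proves the deterministic inequality
\[
\sum_{m\le M^*_\gamma(P_X)}p_m\,\bigl|\mu(x^{(m)})-\mu_P(x^{(m)})\bigr|\ \le\ \Delta:=\sqrt{\tfrac{2M^*_\gamma(P_X)+n}{n(n-1)}\,\EE{Z}}
\]
via Cauchy--Schwarz against the weights $\sqrt{2+np_m}$ together with the lower bound $np_m-1+(1-p_m)^n\ge\frac{n(n-1)p_m^2}{2+np_m}$ (Lemma~\ref{lem:n_s_identity}); this single step is what balances the well- and rarely-sampled regimes. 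The remaining work is to show $\widehat\Delta\ge\Delta$ with probability at least $1-\delta$, which follows from two Chebyshev applications using the variance bounds of Lemma~\ref{lem:properties_of_Z}, and then Markov on the $L^1$ sum gives the final $\alpha-\delta-\gamma$ piece of the miscoverage.
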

Next, we will see how this construction is able to match the rate of the lower bound established in Theorem~\ref{thm:lowerbd}---specifically,
in a scenario where the hypothesized support set and mean function are ``chosen well'', 
i.e., are a good approximation to the true distribution $P$.
For simplicity, we only consider the  case where the marginal $P_X$ is approximately uniform over some finite subset of the hypothesized
support,
and the hypothesized function $\mu$ has uniformly bounded error.
\begin{theorem}\label{thm:upperbd_length}
Suppose the distribution $P$ on $(X,Y)\in\R^d\times\R$ has marginal $P_X$ that is supported on $\{x^{(1)},\dots,x^{(M)}\}$
and satisfies $\Pp{P_X}{X=x^{(m)}}\leq \eta/M$ for all $m$,
and suppose that $P$ has conditional mean $\mu_P:\R^d\rightarrow\R$ that satisfies $\Ep{P_X}{(\mu_P(X)-\mu(X))^2}\leq \textnormal{err}_\mu^2$.
Then the confidence interval constructed in~\eqref{eqn:CI_for_upperbd} satisfies
\[\EE{\textnormal{Leb}(\Ch(X_{n+1})) }\leq c \left(\textnormal{err}_\mu + \frac{M^{1/4}}{n^{1/2}}\right),\]
where $c$ depends only on the parameters $\alpha$, $\delta$, $\gamma$, $\eta$.
\end{theorem}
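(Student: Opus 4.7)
\textbf{Proof plan for Theorem~\ref{thm:upperbd_length}.}
Since $\textnormal{Leb}(\Ch(x))\leq 2\widehat\Delta/(\alpha-\delta-\gamma)$, it suffices to bound $\EE{\widehat\Delta}\leq \sqrt{\EE{\widehat\Delta^2}}$. I would write $\widehat\Delta^2=A\cdot B$ with $A=(2\widehat M_\gamma+n)/(n(n-1))$ and $B=4Z_++8\sqrt{N_{\geq 2}/\delta}+24/\delta$, and dispense with $A$ first: since $P_X$ is supported on $\{x^{(1)},\dots,x^{(M)}\}$, the indicator sum $\sum_{i=1}^n\One{X_i\in\{x^{(1)},\dots,x^{(m)}\}}$ equals $n$ at $m=M$, which exceeds $(1-\gamma)n+\sqrt{n\log(2/\delta)/2}$ once $n\geq \log(2/\delta)/(2\gamma^2)$. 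Hence $\widehat M_\gamma\leq M$ deterministically in this range, giving $A\leq (2M+n)/(n(n-1))$ almost surely; the finitely many exceptional small $n$ contribute only an additive constant absorbed into $c$.

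The heart of the argument is bounding the first two moments of $Z$ and the mean of $N_{\geq 2}$. Let $T_m$ denote the summand in~\eqref{eqn:define_Z}. Expanding $\sum_i(Y_i-\bar y_m)^2=\sum_i(Y_i-\mu(x^{(m)}))^2-n_m(\bar y_m-\mu(x^{(m)}))^2$ yields the bilinear identity
\[T_m=\frac{1}{n_m}\sum_{i\neq j}\One{X_i=X_j=x^{(m)}}\bigl(Y_i-\mu(x^{(m)})\bigr)\bigl(Y_j-\mu(x^{(m)})\bigr).\]
Writing $V_i=Y_i-\mu_P(x^{(m)})$ and $d_m=\mu_P(x^{(m)})-\mu(x^{(m)})$ and substituting $Y_i-\mu(x^{(m)})=V_i+d_m$ decomposes $T_m=\frac{1}{n_m}\sum_{i\neq j}V_iV_j+2d_m(n_m-1)\bar V_m+(n_m-1)d_m^2$. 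Taking conditional moments given $\mathbf{n}=(n_1,n_2,\dots)$ immediately gives $\EEst{T_m}{n_m}=(n_m-1)d_m^2$, and a fourth-moment expansion of the double sum (only the ordered quadruples with $\{i,j\}=\{k,l\}$ contribute, yielding $2n_m(n_m-1)\sigma_m^4$) produces the conditional variance bound $\varst{T_m}{n_m}\leq 2+4(n_m-1)d_m^2$. Combining these with the binomial moment bounds $\EE{(n_m-1)_+}\leq \tfrac{1}{2}n(n-1)p_m^2$ and $\PP{n_m\geq 2}\leq \tfrac{1}{2}n(n-1)p_m^2$, together with the hypothesis $p_m\leq\eta/M$ (so $\sum_m p_m^2\leq \eta/M$) and $\textnormal{err}_\mu^2=\sum_m p_m d_m^2$, yields
\[\EE{Z}\leq \min\!\Bigl\{n,\tfrac{n(n-1)\eta}{2M}\Bigr\}\textnormal{err}_\mu^2,\quad \EE{N_{\geq 2}}\leq \min\!\Bigl\{M,\tfrac{n(n-1)\eta}{2M}\Bigr\},\quad \EE{\varst{Z}{\mathbf{n}}}\leq 2\EE{N_{\geq 2}}+4\EE{Z}.\]
Then $Z_+\leq \EEst{Z}{\mathbf{n}}+|Z-\EEst{Z}{\mathbf{n}}|$ combined with Cauchy--Schwarz gives $\EE{Z_+}\leq \EE{Z}+\sqrt{\EE{\varst{Z}{\mathbf{n}}}}$, and Jensen gives $\EE{\sqrt{N_{\geq 2}/\delta}}\leq \sqrt{\EE{N_{\geq 2}}/\delta}$.

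Inserting these into $\EE{\widehat\Delta^2}\leq \frac{2M+n}{n(n-1)}\cdot\EE{B}$ and splitting into the regimes $M\leq n$ and $n\leq M\leq n^2$ (the case $M>n^2$ being immediate from the clipping $\Ch\subseteq[0,1]$), the key arithmetic identities are $(2M+n)\cdot\min\{n,n^2/M\}\leq 3n^2$ and $(2M+n)\cdot\min\{\sqrt M,n/\sqrt M\}\leq 3n\sqrt M$, both valid in either regime. These yield, respectively, an $O(\textnormal{err}_\mu^2)$ contribution and an $O(\sqrt M/n)$ contribution to $\EE{\widehat\Delta^2}$, with lower-order cross-terms absorbed via AM--GM and the residual $O(M/n^2+1/n)$ piece from $24/\delta$ bounded by the $\sqrt M/n$ term on $M\leq n^2$. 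Hence $\EE{\widehat\Delta^2}\leq c'(\textnormal{err}_\mu^2+\sqrt M/n)$ and $\EE{\widehat\Delta}\leq c''(\textnormal{err}_\mu+M^{1/4}/n^{1/2})$, which is the claim. The most delicate ingredient is the variance bound $\varst{T_m}{n_m}\leq 2+4(n_m-1)d_m^2$, which requires a careful enumeration of the nonvanishing patterns in $\EE{V_iV_jV_kV_l}$ under the constraints $i\neq j$, $k\neq l$; the remainder is standard binomial-moment bookkeeping and the piecewise case analysis in $M$ versus $n$.
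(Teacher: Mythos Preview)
Your proof is correct and follows the same overall architecture as the paper's: bound $\widehat M_\gamma\leq M$, control $\EE{Z_+}$ via $\EE{Z}$ plus the conditional variance $\varst{Z}{\mathbf n}$, bound $\EE{N_{\geq 2}}$, and then split into the regimes $M\lessgtr n$. The technical execution differs in a few places worth noting. First, you recognize the U-statistic identity $T_m=\frac{1}{n_m}\sum_{i\neq j}(Y_i-\mu(x^{(m)}))(Y_j-\mu(x^{(m)}))$, which makes the fourth-moment enumeration for $\varst{T_m}{n_m}$ cleaner than the paper's direct expansion in Lemma~\ref{lem:properties_of_Z}. Second, for the binomial moments you use the elementary pointwise bound $(n_m-1)_+\leq\binom{n_m}{2}$ to get $\EE{(n_m-1)_+}\leq\binom{n}{2}p_m^2$, whereas the paper computes $\EE{(n_m-1)_+}=np_m-1+(1-p_m)^n$ exactly and then invokes the analytic inequality of Lemma~\ref{lem:n_s_identity_upperbd}; your route avoids that lemma entirely. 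Third, you bound $\EE{\widehat\Delta^2}$ and take a square root at the end, while the paper pushes Jensen inside to bound $\EE{\widehat\Delta}$ directly; both are fine. Finally, you are more careful than the paper about the small-$n$ regime: the paper asserts $\widehat M_\gamma\leq M$ almost surely, but as you observe this requires $\gamma n\geq\sqrt{n\log(2/\delta)/2}$, and the finitely many exceptional $n$ are handled by the trivial bound $\textnormal{Leb}(\Ch)\leq 1$ together with $M^{1/4}/n^{1/2}\geq n_0^{-1/2}$ for $n\leq n_0=\log(2/\delta)/(2\gamma^2)$.
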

\noindent To see some concrete examples of where this upper bound might be small, suppose that $\mu$ is constructed
via data splitting (i.e., our initial data set has sample size $2n$, and we use $n$ data points to train $\mu$ and then the remaining $n$ to construct
the confidence interval). If $\mu$ is constructed via logistic regression, and the distribution $P$ follows this model, then under standard
conditions on $P_X$ we would have $\textnormal{err}_\mu =\mathcal{O}(\sqrt{d/n})$; in a $k$-sparse regression setting where we use logistic
lasso we might instead obtain $\textnormal{err}_\mu =\mathcal{O}(\sqrt{k\log(d)/n})$ \citep{negahban2012unified}. If instead $\mu$ is constructed
via $k$-nearest neighbors, if $x\mapsto\mu_P(x)$ is $\beta$-H{\"o}lder smooth (and $k$ is chosen appropriately), then as mentioned earlier
we have $\textnormal{err}_\mu =\mathcal{O}(n^{-\beta/(\beta+d)})$
 \citep{gyorfi2002distribution,gine2016mathematical}.

\section{Discussion}
Our main result, Theorem~\ref{thm:lowerbd}, shows that the problem of constructing distribution-free confidence intervals
for a conditional mean has hardness characterized by the effective support size  $M_\gamma(P_X)$ of the feature distribution; distribution-free confidence
intervals may have vanishing length if the sample size is at least as large as the square root of the effective support size,
but must have length bounded away from zero if the sample size is smaller. The rate of 
the lower bound on length, scaling as $\min\{\frac{M_\gamma(P_X))^{1/4}}{n^{1/2}},1\}$,
is achievable in certain settings---Theorems~\ref{thm:upperbd_valid} and~\ref{thm:upperbd_length} establish that
distribution-free confidence intervals may achieve this length if
 we have a good hypothesis $\mu$ for $\mu_P$. 
Of course, the specific construction used for these matching bounds may not be optimal---both in terms
of constant factors that may inflate its length, and in terms of the range of settings in which it is able (up to constants) to match the lower bound.
Improving this construction to provide a practical and accurate algorithm is an important question for future work.

One counterintuitive implication of our result is that a meaningful distribution-free inference can be achieved even in the case $M_\gamma(P_X) \gg n$,
where with high probability, the new observation $X_{n+1}$ is a value that was never observed in the training set. 
The reason inference is possible in this regime is that the repeated $X$ values in the training set provide some information we need to construct a meaningful confidence interval, and since the set of $X$ values that are repeated is random, this leads to a coverage guarantee (recall that these repeated  $X$ values were 
central to the construction of our confidence interval in Section~\ref{sec:upperbd}).
An interesting possible application of this finding is for distribution-free calibration, where the aim is to cover
within-bin averages of the form $\mu_b = \EEst{Y}{X\in\Xcal_b}$ where $\R^d = \cup_{b=1,\dots,B} \Xcal_b$ is a partition into bins. \citet{gupta2020distribution}
study this problem in the distribution-free setting, and
develop methods for guaranteeing coverage of {\em each} $\mu_b$ when the number of bins satisfies $B\ll n$; in contrast, 
the methods studied in our present work suggest that we may be able to cover $\mu_b$ {\em on average} over all bins $b$
in the regime $n \ll B \ll n^2$.

Generally, all inference methods must inherently involve a tradeoff between the strength of the guarantees, and the precision of the
resulting answers. In this present work, we consider a universally strong guarantee (i.e., coverage of the conditional mean for {\em all} distributions $P$),
which results in precise inference (i.e., vanishing-length confidence intervals) for only {\em some} distributions $P$, namely, those
with effective support size $\ll n^2$. 
This tradeoff may not be desirable in practice, since in an applied setting we might instead prefer
to relax the required coverage properties for more challenging 
distributions $P$ in order to allow for more precise answers.
In practice, we may be satisfied with a validity condition that yields weaker guarantees in a nonatomic
setting, but still yields the stronger coverage guarantee in the achievable regime where $M_\gamma(P_X)\ll n^2$. 
In future work, we aim to study whether this more adaptive type of validity definition, which is weaker than distribution-free coverage,
may  enable us to build confidence intervals
that have vanishing length even in the nonatomic setting.

\subsection*{Acknowledgements}
R.F.B.~was partially supported by the National Science Foundation via grants DMS-1654076 and DMS-2023109, and by the Office of Naval Research via grant N00014-20-1-2337.
The authors thank John Lafferty for helpful discussions that inspired this work.

\bibliographystyle{plainnat}
\bibliography{bib}

\appendix

\section{Proof of Proposition~\ref{prop:why_bounded}}To prove this proposition, we will consider
replacing $P$ with a distribution that places vanishing probability on some extremely large value.\footnote{Similar constructions
are used in many related results in the literature---e.g., \citet[Lemma 1]{lei2014distribution} 
 proves an analogous infinite-width
result for the problem of prediction intervals required to be valid conditional on $X_{n+1}$, while here
we are interested in confidence intervals but only require marginal validity.}
Fix any distribution $P$, and any $y\in\R$.
For any fixed $\eps>0$, define a new distribution $Q$ as follows: 
\[\textnormal{Draw $X\sim P_X$, then draw $Y|X \sim (1 - \eps)P_{Y|X} + \eps \delta_{\eps^{-1} y - (\eps^{-1}-1)\mu_P(X)}$},\]
where $P_{Y|X}$ is the conditional distribution of $Y|X$ under $P$, and $\delta_t$ denotes the point mass at $t$.
Then we can trivially calculate that $\dtv(P^n\times P_X, Q^n\times Q_X) \leq n \eps$. Therefore,
\[\Pp{P^n\times P_X}{y\in \Ch(X_{n+1})} \geq \Pp{Q^n\times Q_X}{y\in \Ch(X_{n+1})} - n\eps.\]
On the other hand, the distribution $Q$ has conditional mean
\[\mu_Q(x) = (1-\eps)\mu_P(x) + \eps \left(\eps^{-1} y - (\eps^{-1}-1)\mu_P(x)\right)  = y,\]
and so the conditional mean $\mu_Q(X_{n+1})$ is equal to $y$ almost surely. Therefore,
\[\Pp{Q^n\times Q_X}{y\in\Ch(X_{n+1})}=\Pp{Q^n\times Q_X}{\mu_Q(X_{n+1})\in \Ch(X_{n+1})} \geq1- \alpha,\]
where the last step holds since $\Ch$ must satisfy distribution-free coverage and, therefore, must satisfy coverage
with respect to $Q$. Since $\eps>0$ is arbitrarily small, this completes the proof.

\section{Proof of Theorem~\ref{thm:lowerbd}}
To prove the theorem, we will need several supporting lemmas:
\begin{lemma}\label{lem:median_split}
Let $Q$ be any distribution on $[0,1]$ with variance $\sigma^2$.
Then we can write $Q$ as a mixture of two distributions $Q_0,Q_1$ on $[0,1]$ such that
\[Q = 0.5 Q_0 + 0.5 Q_1\textnormal{ and }
\Ep{Q_1}{X} - \Ep{Q_0}{X} \geq 2\sigma^2.\]
\end{lemma}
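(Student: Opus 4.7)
The plan is to split $Q$ at its median. Let $m$ be a median of $Q$, i.e., a point with $Q(X \leq m) \geq 1/2$ and $Q(X \geq m) \geq 1/2$. If $Q$ has no atom at $m$ this is unambiguous; otherwise I would split the atom at $m$ between the two pieces so that the ``lower piece'' $Q_0$ (supported on $[0,m]$) and the ``upper piece'' $Q_1$ (supported on $[m,1]$) each carry mass exactly $1/2$, and then renormalize each to a probability distribution. By construction $Q = 0.5 Q_0 + 0.5 Q_1$.

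Next I would compute the gap. Writing $\mu = \Ep{Q}{X}$, the decomposition gives
\[
0.5\,\Ep{Q_0}{X} + 0.5\,\Ep{Q_1}{X} = \mu,
\qquad
\Ep{Q_1}{X} - \Ep{Q_0}{X} \;=\; 2\int x\,\mathrm{sgn}(x-m)\,dQ(x),
\]
where $\mathrm{sgn}$ is defined so that the split at $m$ contributes $0$ net mass (using the atom-splitting described above, $\int \mathrm{sgn}(x-m)\,dQ(x) = 0$). Then
\[
\Ep{Q_1}{X} - \Ep{Q_0}{X}
= 2\int (x-m)\,\mathrm{sgn}(x-m)\,dQ(x)
= 2\,\Ep{Q}{|X - m|}.
\]

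It remains to lower-bound $\Ep{Q}{|X-m|}$ by $\sigma^2$. The key observation is that because $X \in [0,1]$ and $m \in [0,1]$, we have $|X-m| \leq 1$, hence $|X-m| \geq (X-m)^2$ pointwise. Therefore
\[
\Ep{Q}{|X-m|} \;\geq\; \Ep{Q}{(X-m)^2} \;\geq\; \Ep{Q}{(X-\mu)^2} \;=\; \sigma^2,
\]
where the second inequality uses the fact that the mean $\mu$ minimizes the squared deviation. Combining gives $\Ep{Q_1}{X} - \Ep{Q_0}{X} \geq 2\sigma^2$, as required.

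The only subtle step is the atom-at-the-median bookkeeping: if $Q(\{m\}) > 0$, one cannot simply condition on $X \leq m$ versus $X > m$, and I would handle this by writing the atom's mass as a convex combination that assigns exactly enough weight to each side to make $Q_0$ and $Q_1$ each have total mass $1/2$. Everything else is a short calculation, and the inequality $|t| \geq t^2$ for $t \in [-1,1]$ is really the whole content of why the factor $2\sigma^2$ (rather than $2\sigma$) is the right bound.
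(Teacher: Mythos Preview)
Your argument is correct and uses the same median-split construction as the paper, but your route to the inequality $\mu_1-\mu_0\geq 2\sigma^2$ is genuinely different and cleaner. The paper applies the law of total variance to the mixture $Q=0.5Q_0+0.5Q_1$, obtaining $\sigma^2=\tfrac{(\mu_1-\mu_0)^2}{4}+0.5\sigma_0^2+0.5\sigma_1^2$, and then bounds each $\sigma_j^2$ via the Popoviciu-type inequality $\var{W}\leq(\Ep{}{W}-a)(b-\Ep{}{W})$ for a distribution on $[a,b]$; some algebra and the estimate $|x_{\textnormal{med}}-\mu|\leq\tfrac12(\mu_1-\mu_0)$ finish the job. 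Your approach replaces all of this by two lines: the identity $\mu_1-\mu_0=2\,\Ep{Q}{|X-m|}$ (immediate from the construction), followed by $|X-m|\geq(X-m)^2$ on $[0,1]$ and the fact that the mean minimizes mean squared error. This is shorter, avoids the variance bounds and the case-analysis algebra, and makes transparent exactly where boundedness is used (namely, to pass from $|t|$ to $t^2$). The paper's approach does give the intermediate decomposition $\sigma^2=\tfrac{(\mu_1-\mu_0)^2}{4}+0.5\sigma_0^2+0.5\sigma_1^2$, which is informative in its own right, but for the purposes of this lemma your argument is preferable.
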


\begin{lemma}\label{lem:XZ_TV}
Let $P_X$ be any distribution on $\R^d$, and let $\R^d=\Xcal_1\cup\Xcal_2\cup \dots$ be a fixed partition.
 Define a distribution $P_0$ on $(X,Z)\in\R^d\times\{0,1\}$ as:
\[\textnormal{Draw $X\sim P_X$, and draw $Z\sim\textnormal{Bernoulli}(0.5)$, independently from $X$.}\]
For any fixed sequence $a=(a_1,a_2,\dots)$ of signs $a_1,a_2,\dots\in\{\pm1\}$, and any fixed $\eps_1,\eps_2,\dots\in[0,0.5]$, 
define 
a distribution $P_a$ on
on $(X,Z)\in\R^d\times\{0,1\}$ as:
\[\textnormal{Draw $X\sim P_X$, and conditional on $X$, draw $Z|X\in\Xcal_m\sim\textnormal{Bernoulli}(0.5 + a_m \cdot\eps_m)$.}\]
Finally define $\tilde{P}_0 = (P_0)^n$  (i.e., $n$ i.i.d.~draws from $P_0$), and define $\tilde{P}_1$ as the mixture distribution:
\begin{itemize}
\item Draw $A_1,A_2,\dots\iidsim\textnormal{Unif}\{\pm 1\}$.
\item Conditional on $A_1,A_2,\dots$, draw $(X_1,Z_1),\dots,(X_n,Z_n)\iidsim P_A$.
\end{itemize}
Then
\[\dtv(\tilde{P}_0,\tilde{P}_1)\leq 2  n\sqrt{\sum_{m=1}^{\infty} \eps_m^4\cdot \Pp{P_X}{X\in\Xcal_m}^2}.\]
\end{lemma}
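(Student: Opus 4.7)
The plan is to bound $\dtv(\tilde{P}_0,\tilde{P}_1)$ via the chain $\dtv \le \sqrt{\dkl/2}$ (Pinsker) together with $\dkl(\tilde{P}_1\|\tilde{P}_0) \le \log\bigl(1+\chi^2(\tilde{P}_1\|\tilde{P}_0)\bigr)$ (Jensen), and then to compute the $\chi^2$-divergence explicitly using an Ingster--Suslina style second-moment calculation, exploiting (a) the mixture structure of $\tilde{P}_1$ over the prior on the sign sequence $A$ and (b) the product structure of $\tilde{P}_0$ over the $n$ i.i.d.~samples.

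First, I would compute the likelihood ratio of $P_a$ against $P_0$ at a single observation: on $(x,z)$ with $x \in \Xcal_m$, one finds $\frac{dP_a}{dP_0}(x,z) = 1 + 2 a_m \eps_m (2z-1)$. Writing $\tilde{P}_1 = \Ep{A}{P_A^n}$ and introducing an independent copy $A'$, Fubini and the i.i.d.~structure give
\[
1 + \chi^2(\tilde{P}_1\|\tilde{P}_0) \;=\; \Ep{A,A'}{\Bigl(\Ep{(X,Z)\sim P_0}{\tfrac{dP_A}{dP_0}(X,Z)\tfrac{dP_{A'}}{dP_0}(X,Z)}\Bigr)^{n}}.
\]
Since $Z\sim\textnormal{Bernoulli}(0.5)$ under $P_0$ has $\Ep{}{2Z-1}=0$ and $\Ep{}{(2Z-1)^2}=1$, the inner expectation collapses cleanly to $1 + 4\sum_m p_m A_m A'_m \eps_m^2$, where $p_m := \Pp{P_X}{X\in\Xcal_m}$. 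Call this sum $S$; note that $|S|\le 1$ since $\eps_m\le 1/2$ and $\sum_m p_m = 1$, so $1+S\ge 0$.

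The remaining work is to control $\Ep{A,A'}{(1+S)^n}$. Apply the elementary inequality $1+x\le e^x$ to get $(1+S)^n \le e^{nS}$. The products $A_m A'_m$ are independent Rademacher variables across $m$, so the MGF factorizes and each factor is bounded via $\cosh(t) \le e^{t^2/2}$:
\[
\Ep{A,A'}{e^{nS}} \;=\; \prod_{m\ge 1}\cosh\bigl(4n p_m \eps_m^2\bigr) \;\le\; \exp\Bigl(8 n^2 \sum_{m\ge 1} p_m^2 \eps_m^4 \Bigr).
\]
Therefore $\log(1+\chi^2) \le 8n^2\sum_m p_m^2 \eps_m^4$, and plugging into Pinsker yields $\dtv(\tilde{P}_0,\tilde{P}_1) \le \sqrt{4 n^2 \sum_m p_m^2 \eps_m^4} = 2n\sqrt{\sum_m \eps_m^4 p_m^2}$, matching the claim.

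I do not anticipate a real obstacle: the whole argument is a textbook second-moment method against a Rademacher mixture. The only delicate point is going through Jensen/Pinsker rather than the cruder $\dtv\le\tfrac12\sqrt{\chi^2}$, since the latter would only yield $\tfrac12\sqrt{e^{8n^2\sum p_m^2\eps_m^4}-1}$, which gives the desired linear-in-$n$ bound only in the small-$\chi^2$ regime. Using $\log(1+\chi^2)$ inside Pinsker converts the exponential MGF bound into the clean quadratic-in-$n$ KL bound needed for the stated inequality.
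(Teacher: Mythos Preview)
Your proposal is correct, and it reaches the same final bound via a genuinely different route from the paper.

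The paper first reduces to the discrete label $W\in\mathbb{N}$ recording which cell $\Xcal_m$ contains $X$ (using the data-processing inequality for total variation), and then bounds $\dkl(\tilde P_1'\|\tilde P_0')$ directly: it conditions on the per-cell counts $N_m$, observes that $K_m\mid N_m$ is a symmetric two-point Binomial mixture, and invokes a separate supporting lemma showing
\[
\dkl\Bigl(0.5\,\textnormal{Binom}(N,0.5+\eps)+0.5\,\textnormal{Binom}(N,0.5-\eps)\ \Big\|\ \textnormal{Binom}(N,0.5)\Bigr)\le 8N(N-1)\eps^4.
\]
Summing over $m$ and using $\EE{N_m(N_m-1)}=n(n-1)p_m^2$ gives $\dkl\le 8n(n-1)\sum_m \eps_m^4 p_m^2$, after which Pinsker yields the claim.

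Your Ingster--Suslina $\chi^2$ calculation bypasses both the sufficiency reduction and the Binomial lemma, and is shorter; the price is a slightly looser constant ($8n^2$ in place of $8n(n-1)$ inside the KL bound), which is irrelevant after Pinsker. The paper's conditioning-on-counts argument has the advantage that it isolates the per-cell contribution as an explicit KL between Binomial mixtures, which may be of independent interest; your approach has the advantage of being entirely self-contained.
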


We are now ready to prove the theorem. Define $\Xcal_1 = \{x\in\R^d : \Pp{P_X}{X=x} > \frac{1}{M_\gamma(P_X)}\}$.
We must have $|\Xcal_1|< M_\gamma(P_X)$ since $P_X$ is a probability measure,
 and therefore, by definition of the effective support size, we must have $\Pp{P_X}{X\in\Xcal_1}<1-\gamma$.
On the set $\R^d\backslash\Xcal_1$, any point masses of the distribution $P_X$ must each have probability $\leq 1/M_\gamma(P_X)$, by definition of $\Xcal_1$;
$P_X$ may also have a nonatomic component.
Applying \citet[Proposition A.1]{dudley2011concrete}, we can partition $\R^d\backslash \Xcal_1$ into countably many sets, $\Xcal_2\cup\Xcal_3\cup\dots$,
such that $\Pp{P_X}{X\in\Xcal_m}\leq 1/M_\gamma(P_X)$ for all $m\geq 2$. From this point on, write $p_m=\Pp{P_X}{X\in\Xcal_m}$.

For each $x$ in the support of $P_X$,  let $P_{Y|X=x}$ denote the conditional distribution of $Y$ given $X=x$. 
By Lemma~\ref{lem:median_split}, we can construct distributions $P^1_{Y|X=x}$ and $P^0_{Y|X=x}$ such that
\[P_{Y|X=x} = 0.5 P^1_{Y|X=x}+0.5 P^0_{Y|X=x}\textnormal{ and }
\Ep{P^1_{Y|X=x}}{Y} - \Ep{P^0_{Y|X=x}}{Y}\geq 2\sigma^2_P(x),\]
where $\sigma^2_P(x)=\varst{Y}{X=x}$ is the variance of $P_{Y|X=x}$.

Next fix any $\eps\in(0,0.5]$. For any vector $a=(a_1,a_2,\dots)$ of signs
 $a_1,a_2,\dots\in\{\pm 1\}$, define the distribution $P_a$ over $(X,Y)$ as follows:
\begin{itemize}
\item Draw $X\sim P_X$, i.e., the same as the marginal distribution of $X$ under $P$.
\item Conditional on $X$, draw $Y$ as
\[Y\mid X=x \sim \begin{cases}P_{Y|X=x}, & \textnormal{ if }x\in\Xcal_1,\\
(0.5+a_m\eps)\cdot P^1_{Y|X=x} + (0.5-a_m\eps)\cdot P^0_{Y|X=x}, & \textnormal{ if }x\in\Xcal_m\textnormal{ for } m\geq 2.\end{cases}\]
\end{itemize}
In other words, $P_a$ differs from $P$ in that, conditional on $X=x\in\Xcal_m$ for any $m\geq 2$, the distribution of the variable $Y$ is 
perturbed to be slightly more likely (if $a_m=+1$) or 
slightly less likely (if $a_m=-1$) to be drawn from $ P^1_{Y|X=x} $ rather than from $ P^0_{Y|X=x} $. Finally, we define
a mixture distribution $P_{\textnormal{mix}}$ on $(X_1,Y_1),\dots,(X_n,Y_n)$ as:
\begin{itemize}
\item Draw $A_1,A_2,\dots\iidsim\textnormal{Unif}\{\pm 1\}$.
\item Conditional on $A_1,A_2,\dots$, draw $(X_1,Y_1),\dots,(X_n,Y_n)\iidsim P_A$.
\end{itemize}

Below, we will verify that  we can apply Lemma~\ref{lem:XZ_TV} to obtain
\begin{equation}\label{eqn:apply_lem:XZ_TV}\dtv\left( P_{\textnormal{mix}} , P^n\right)\leq 2  n\sqrt{\sum_{m\geq 1} \eps_m^4p_m^2}= 2  n\sqrt{\sum_{m\geq 2} \eps^4p_m^2}\leq \frac{2\eps^2 n}{\sqrt{M_\gamma(P_X)}},\end{equation}
where the last step holds since $p_m\leq 1/M_\gamma(P_X)$ for all $m\geq 2$, by definition.

The remainder of the proof will center on the fact that, 
if $\eps$ is chosen to make the total
variation distance between  $P^n $ and $P_{\textnormal{mix}}$ sufficiently small, then
it is impossible to distinguish between data drawn from $P^n $ or from $P_A^n$ for a random $A$ (i.e., from $P_{\textnormal{mix}}$);
 since the conditional mean of $Y|X$ differs by $\mathcal{O}(\eps)$ between $P$ and $P_A$,
this means that our confidence interval for $\mu_P$ will need to have width at least $\mathcal{O}(\eps)$.
For any $P_a$, since $\Ch$ satisfies distribution-free coverage, we have

\[\Pp{(P_a)^n\times P_X}{\mu_{P_a}(X_{n+1}) \in \Ch(X_{n+1})}\geq 1-\alpha.\]
We can also calculate, for each $m\geq 2$ and each $x\in\Xcal_m$ that lies in the support of $P_X$,
\begin{align*}
\mu_{P_a}(x) &= (0.5+a_m\eps)\Ep{P^1_{Y|X=x}}{Y} + (0.5-a_m\eps)\Ep{P^0_{Y|X=x}}{Y}\\
&=0.5 \left(\Ep{P^1_{Y|X=x}}{Y} + \Ep{P^0_{Y|X=x}}{Y}\right) + a_m\eps  \left(\Ep{P^1_{Y|X=x}}{Y} - \Ep{P^0_{Y|X=x}}{Y}\right)\\
&=\mu_P(x)+ a_m\eps \Delta(x),\end{align*}
where we write
$\Delta(x) =   \left(\Ep{P^1_{Y|X=x}}{Y} - \Ep{P^0_{Y|X=x}}{Y}\right)$.
In particular, if  $X_{n+1}\not\in\Xcal_1$, then
\[\mu_{P_a}(X_{n+1}) \in \Ch(X_{n+1}) \ \Rightarrow \ \{\mu_P(X_{n+1}) \pm \eps \Delta(X_{n+1})\}\cap \Ch(X_{n+1}) \neq \emptyset.\]
Therefore,
\begin{align*}
& \Pp{(P_a)^n\times P_X}{\{\mu_P(X_{n+1}) \pm \eps \Delta(X_{n+1})\}\cap \Ch(X_{n+1}) \neq \emptyset}\\
&\geq \Pp{(P_a)^n\times P_X}{\mu_{P_a}(X_{n+1}) \in \Ch(X_{n+1})\textnormal{ and }X_{n+1}\not\in\Xcal_1}\\
&\geq \Pp{(P_a)^n\times P_X}{\mu_{P_a}(X_{n+1}) \in \Ch(X_{n+1})} - 
 \Pp{(P_a)^n\times P_X}{X_{n+1}\in\Xcal_1}\\
 &\geq (1-\alpha)-(1-\gamma) = \gamma - \alpha.\end{align*}
Since this bound holds for all sign vectors $a=(a_1,a_2,\dots)$, and 
since $P_{\textnormal{mix}}$ is a mixture of distributions $(P_a)^n$, we therefore have
\[
\Pp{P_{\textnormal{mix}}\times P_X}{\{\mu_P(X_{n+1}) \pm \eps \Delta(X_{n+1})\}\cap \Ch(X_{n+1}) \neq \emptyset}\\
\geq \gamma-\alpha.\]
By our total variation bound above, therefore,
\begin{equation}\label{eqn:alpha_gamma_step}\Pp{P^n\times P_X}{\{\mu_P(X_{n+1}) \pm \eps \Delta(X_{n+1})\}\cap \Ch(X_{n+1})\neq\emptyset}\\\geq \gamma-\alpha- \frac{2\eps^2 n}{\sqrt{M_\gamma(P_X)}}.\end{equation}

Now fix some $\eps_0\in[0,0.5]$. We calculate
\begin{align*}
&\textnormal{Leb}\left(\Ch(X_{n+1})\right)
=\!\int_{t\in\R} \!\!\!\!\One{t\in \Ch(X_{n+1})}\;\mathsf{d}t\\
&\geq\!\int_{t\geq0} \!\!\!\!\One{\{\mu_P(X_{n+1}) \pm t\}\cap \Ch(X_{n+1})\neq\emptyset}\;\mathsf{d}t\\
&\geq\int_{t=0}^{\eps_0\Delta(X_{n+1})} \One{\{\mu_P(X_{n+1}) \pm t\}\cap \Ch(X_{n+1})\neq\emptyset}\;\mathsf{d}t\\
&= \int_{\eps=0}^{\eps_0} \One{\{\mu_P(X_{n+1}) \pm\eps\Delta(X_{n+1})\}\cap \Ch(X_{n+1})\neq \emptyset} \cdot \Delta(X_{n+1}) \;\mathsf{d}\eps\\
&\geq 2\sigma^2_{P,\beta} \int_{\eps=0}^{\eps_0} \One{\sigma^2_P(X_{n+1})\geq \sigma^2_{P,\beta}\textnormal{ and }\{\mu_P(X_{n+1}) \pm\eps\Delta(X_{n+1})\}\cap \Ch(X_{n+1})\neq \emptyset} \;\mathsf{d}\eps,
\end{align*}
where the last step holds since $\Delta(X_{n+1})\geq 2\sigma^2_P(X_{n+1})$ by Lemma~\ref{lem:median_split}.
Applying~\eqref{eqn:alpha_gamma_step}, and since $\PP{\sigma^2_P(X_{n+1})\geq \sigma^2_{P,\beta}} \geq 1-\beta$ by definition of $\sigma^2_{P,\beta}$, we have
\begin{multline*}
\Ep{P^n\times P_X}{\textnormal{Leb}\left(\Ch(X_{n+1})\right)}
\geq2\sigma^2_{P,\beta} \int_{\eps=0}^{\eps_0}  \left(\gamma- \alpha
 - \frac{2\eps^2 n}{\sqrt{M_\gamma(P_X)}}\right) -\beta
\;\mathsf{d}\eps\\
=2\sigma^2_{P,\beta}\left[\eps_0(\gamma-\alpha-\beta) - \frac{2\eps_0^3 n}{3\sqrt{M_\gamma(P_X)}}\right].
\end{multline*}
Finally, choosing $\eps_0 = \min\Big\{ \left(\frac{(\gamma-\alpha-\beta)\sqrt{M_\gamma(P_X)}}{2 n}\right)^{1/2},0.5\Big\}$ yields
the desired lower bound.

To complete the proof, we need to verify~\eqref{eqn:apply_lem:XZ_TV}.
To compare $P$ and $P_a$, we can equivalently characterize these distributions as follows:
\begin{itemize}
\item Draw $X\sim P_X$.
\item Conditional on $X$, draw 
$Z\mid X\in\Xcal_m \sim\textnormal{Bernoulli}(0.5)$ (for the distribution $P$, or for the distribution $P_a$ if $m=1$), or $Z\mid X\in\Xcal_m \sim\textnormal{Bernoulli}(0.5+a_m\eps)$ (for the distribution $P_a$ if $m\geq 2$).
\item Conditional on $X,Z$ draw $Y$ as
\[Y\mid X=x,Z=z \sim  P^z_{Y|X=x}.\]
\end{itemize}
Define $\tilde{P}$ as the distribution over $(X,Y,Z)$ induced by $P$, and $\tilde{P}_a$ as the distribution 
over $(X,Y,Z)$ induced by $P_a$. Then the marginal distribution of $(X,Y)$ under $\tilde{P}$
and under $\tilde{P}_a$ is given by $P$ and by $P_a$, respectively.

Now consider comparing two distributions on triples $(X_1,Z_1,Y_1),\dots,(X_n,Z_n,Y_n)$. 
We will compare $\tilde{P}^n$ versus the mixture distribution $\tilde{P}_{\textnormal{mix}}$ 
defined as follows:
\begin{itemize}
\item Draw $A_1,A_2,\dots\iidsim\textnormal{Unif}\{\pm 1\}$.
\item Conditional on $A_1,A_2,\dots$, draw $(X_1,Y_1,Z_1),\dots,(X_n,Y_n,Z_n)\iidsim \tilde{P}_A$.
\end{itemize}
Since in our characterization above,  the distribution of $Y_1,\dots,Y_n$ conditional on $X_1,\dots,X_n$ and on $Z_1,\dots,Z_n$ is the same for both,
the only difference lies in the conditional distribution of $Z_1,\dots,Z_n$ given $X_1,\dots,X_n$.
Therefore, we can apply Lemma~\ref{lem:XZ_TV} with $\eps_1 =0$ and $\eps_2=\eps_3=\dots = \eps$ 
to obtain
\[\dtv\left( \tilde{P}_{\textnormal{mix}} , \tilde{P}^n\right)\leq  2  n\sqrt{\sum_{m\geq 2} \eps^4p_m^2}.\]

Now let $P_{\textnormal{mix}}$ be the marginal distribution of $(X_1,Y_1),\dots,(X_n,Y_n)$ under $\tilde{P}_{\textnormal{mix}}$.
Noting that $P^n$ is the marginal distribution of $(X_1,Y_1),\dots,(X_n,Y_n)$ under $\tilde{P}^n$, we therefore have
\[\dtv\left( P_{\textnormal{mix}} , P^n\right)\leq\dtv\left( \tilde{P}_{\textnormal{mix}} , \tilde{P}^n\right)\leq  2  n\sqrt{\sum_{m\geq 2} \eps^4p_m^2}.\]

\section{Proof of Theorem~\ref{thm:upperbd_valid}}
First, define $p_m=\Pp{P_X}{X=x^{(m)}}$.
The following lemma establishes some results on its support, expected value, and concentration properties of $Z$:
\begin{lemma}\label{lem:properties_of_Z}
For $Z$ and $N_{\geq 2}$ defined as in~\eqref{eqn:define_Z} and~\eqref{eqn:define_N2}, the following holds:
\begin{align*}
&\EE{Z} = \sum_{m=1}^{\infty}(\mu(x^{(m)})-\mu_P(x^{(m)}))^2 \cdot \big(np_m - 1  + (1-p_m)^n\big),\\
&\EEst{Z}{X_1,\dots,X_n} = \sum_{m=1}^{\infty}(n_m-1)_+ \cdot \big(\mu(x^{(m)})-\mu_P(x^{(m)})\big)^2, \\
&\var{\EEst{Z}{X_1,\dots,X_n}}\leq 2\EE{Z},\\
&\varst{Z}{X_1,\dots,X_n}\leq N_{\geq 2} + 2\EEst{Z}{X_1,\dots,X_n}.
\end{align*}
\end{lemma}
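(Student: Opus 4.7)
The plan is to handle the four claims in order, since each builds on the previous. For claim (2), I would condition on $X_1,\ldots,X_n$: the responses $\{Y_i : X_i = x^{(m)}\}$ are then i.i.d.\ with mean $\mu_P(x^{(m)})$ and conditional variance $\sigma_m^2$ for each $m$ with $n_m \geq 2$, so standard sample-moment identities give $\EEst{\bar y_m}{X_{1:n}} = \mu_P(x^{(m)})$, $\varst{\bar y_m}{X_{1:n}} = \sigma_m^2/n_m$, and $\EEst{s_m^2}{X_{1:n}} = \sigma_m^2$. The noise contributions cancel, yielding $\EEst{(\bar y_m - \mu(x^{(m)}))^2 - n_m^{-1} s_m^2}{X_{1:n}} = (\mu_P(x^{(m)}) - \mu(x^{(m)}))^2$; multiplying by $(n_m-1)$, summing over $m$ with $n_m\geq 2$, and using $(n_m-1)\One{n_m\geq 2} = (n_m-1)_+$ gives claim (2). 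Claim (1) then follows by taking the total expectation and using $\EE{(n_m-1)_+} = \EE{n_m} - \PP{n_m\geq 1} = np_m - 1 + (1-p_m)^n$ for $n_m \sim \textnormal{Binom}(n,p_m)$.

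For claim (3), I would write $\EEst{Z}{X_{1:n}} = \sum_m c_m (n_m-1)_+$ where $c_m := (\mu(x^{(m)}) - \mu_P(x^{(m)}))^2 \in [0,1]$. Since the multinomial counts $(n_m)$ are negatively associated and $(n_m-1)_+$ is nondecreasing in $n_m$, the off-diagonal covariances between $(n_m-1)_+$ and $(n_{m'}-1)_+$ are nonpositive; combined with $c_m^2 \leq c_m$, this reduces the claim to the termwise Binomial inequality $\var{(N-1)_+} \leq 2\EE{(N-1)_+}$ for $N \sim \textnormal{Binom}(n,p)$. Writing $(N-1)_+ = N - \One{N\geq 1}$, both sides admit closed form and the inequality becomes the elementary (but tight) algebraic bound $np + np^2 + q + q^2 + 2npq \geq 2$ with $q = (1-p)^n$; this holds with equality at $p=0$ and is nondecreasing in $p$ on $[0,1]$, verifiable by direct expansion of the derivative.

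For claim (4), the key move is to rewrite $Z$ as a quadratic form. Using the identity $n_m(n_m-1)[(\bar y_m - \mu(x^{(m)}))^2 - n_m^{-1} s_m^2] = \sum_{i \neq j,\, X_i = X_j = x^{(m)}}(Y_i - \mu(x^{(m)}))(Y_j - \mu(x^{(m)}))$, I would show
\[Z = \sum_{i \neq j} a_{ij}\, e_i\, e_j = e^\top A e,\qquad e_i := Y_i - \mu(X_i),\ a_{ij} := \One{X_i = X_j}/n(X_i),\]
where $A$ is symmetric with $a_{ii}=0$ and $n(x) = \sum_k \One{X_k = x}$. Conditional on $X_{1:n}$, the $e_i$ are independent with means $d_i := \mu_P(X_i) - \mu(X_i)$ and variances $\tau_i^2 \leq \tfrac{1}{4}$ (since $Y \in [0,1]$). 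Decomposing $e_i = d_i + \eta_i$ and expanding $e^\top A e$, the covariance between the linear-in-$\eta$ and quadratic-in-$\eta$ parts vanishes (by mean-zero and independence), giving the standard quadratic-form identity $\varst{Z}{X_{1:n}} = 4\sum_i (Ad)_i^2 \tau_i^2 + 2\sum_{i\neq j} a_{ij}^2 \tau_i^2 \tau_j^2$. Direct evaluation then gives $\sum_i (Ad)_i^2 = \sum_m (n_m-1)^2 c_m / n_m \leq \EEst{Z}{X_{1:n}}$ and $\sum_{i \neq j} a_{ij}^2 = \sum_{m : n_m \geq 2}(n_m-1)/n_m \leq N_{\geq 2}$, so with $\tau_i^2 \leq \tfrac{1}{4}$ one obtains $\varst{Z}{X_{1:n}} \leq \EEst{Z}{X_{1:n}} + \tfrac{1}{8} N_{\geq 2} \leq 2\EEst{Z}{X_{1:n}} + N_{\geq 2}$.

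The main obstacle is the tight algebraic inequality in claim (3): because $\var{(N-1)_+}$ and $2\EE{(N-1)_+}$ have matching leading-order behavior $\binom{n}{2}p^2$ as $p \to 0$, the factor of 2 is essentially unimprovable and crude bounds fail, so a careful derivative-based verification is needed. Once the U-statistic reformulation $Z = e^\top A e$ is found, by contrast, claim (4) reduces to a routine quadratic-form variance computation.
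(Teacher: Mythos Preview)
Your treatment of the first three claims matches the paper's almost exactly: the same sample-moment cancellation for the conditional expectation, the same Binomial identity $\EE{(n_m-1)_+} = np_m - 1 + (1-p_m)^n$ for the unconditional expectation, and---for $\var{\EEst{Z}{X_{1:n}}}$---the same reduction (via nonpositive off-diagonal covariances among multinomial counts and $c_m^2\le c_m$) to the termwise inequality $\var{(N-1)_+} \leq 2\EE{(N-1)_+}$, which the paper also verifies by checking equality at $p=0$ and nonnegativity of the derivative on $[0,1]$.

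Your route for the conditional variance bound is genuinely different and cleaner. You recognize the U-statistic identity
\[
(n_m-1)\big[(\bar y_m - \mu(x^{(m)}))^2 - n_m^{-1}s_m^2\big] = \frac{1}{n_m}\sum_{\substack{i\neq j\\ X_i=X_j=x^{(m)}}} e_i e_j,
\]
write $Z = e^\top A e$ with zero diagonal, split $e=d+\eta$, and apply the standard off-diagonal quadratic-form variance formula; this gives $\varst{Z}{X_{1:n}} \leq \EEst{Z}{X_{1:n}} + \tfrac18 N_{\geq 2}$, strictly sharper than the stated bound. The paper instead works term by term: it centers $(\bar y_m - \mu(x^{(m)}))^2 - n_m^{-1}s_m^2$ at its conditional mean, applies $(a+b+c)^2 \le 4a^2+2b^2+4c^2$, and separately bounds the fourth moment of $\bar y_m - \mu_P(x^{(m)})$, the cross term, and $(n_m^{-1}s_m^2)^2$, arriving at $\varst{Z_m}{X_{1:n}} \leq 1 + 2(n_m-1)(\mu_P(x^{(m)})-\mu(x^{(m)}))^2$ before summing. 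Your quadratic-form approach avoids these piecewise moment estimates and the loose $(a+b+c)^2$ step, buying better constants and a shorter argument; the paper's expansion is more elementary but correspondingly looser.
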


In particular, the first part of the lemma will allow us to use $\EE{Z}$ to bound the error in $\mu$---here the calculations are similar to those in
\citet{chan2014optimal}
for the setting of testing discrete distributions. 
Recalling the definition of $M^*_\gamma(P_X)$ given in~\eqref{eqn:Mstar}, define
 \[\Delta = \sqrt{\frac{2M^*_\gamma(P_X)+n}{n(n-1)}}\cdot \sqrt{\EE{Z}}.\]
We have
\begin{align*}\sum_{m=1}^{M^*_\gamma(P_X)} p_m |\mu(x^{(m)})-\mu_P(x^{(m)})|
&= \sum_{m=1}^{M^*_\gamma(P_X)} \frac{p_m|\mu(x^{(m)})-\mu_P(x^{(m)})|}{\sqrt{2+np_m}} \cdot \sqrt{2+np_m}\\
&\leq \sqrt{\sum_{m=1}^{M^*_\gamma(P_X)} \frac{p_m^2 (\mu(x^{(m)})-\mu_P(x^{(m)}))^2}{2+np_m}}\cdot\sqrt{\sum_{m=1}^{M^*_\gamma(P_X)} 2+np_m}\\
&\leq \sqrt{\frac{\EE{Z}}{n(n-1)}}\cdot\sqrt{2M^*_\gamma(P_X)+n}\\
&=\Delta,
\end{align*}
where the next-to-last step holds by the following identity:
\begin{lemma}\label{lem:n_s_identity}
For all $n\geq 1$ and $p\in[0,1]$,
$np - 1 + (1-p)^n \geq \frac{n(n-1)p^2}{2+np}$.
\end{lemma}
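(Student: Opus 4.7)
The plan is to interpret $np - 1 + (1-p)^n$ probabilistically and then apply a carefully chosen Cauchy--Schwarz inequality. The motivating observation is that the right-hand side numerator $n(n-1)p^2$ is exactly the second factorial moment $\EE{X(X-1)}$ of a $\textnormal{Binom}(n,p)$ random variable $X$, which strongly suggests that Cauchy--Schwarz applied to moments of $X$ should deliver the bound.

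First, I would let $X \sim \textnormal{Binom}(n,p)$ and set $T = (X-1)_+$. A one-line computation using $(X-1)_+ = X - \One{X \geq 1}$ pointwise gives $\EE{T} = \EE{X} - \PP{X \geq 1} = np - 1 + (1-p)^n$, so the lemma reduces to showing $\EE{T} \geq \frac{n(n-1)p^2}{2+np}$.

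Next, I would exploit the fact that $X(X-1)$ and $X^2(X-1)$ both vanish whenever $X \in \{0,1\}$, so $XT = X(X-1)$ and $X^2 T = X^2(X-1)$ hold identically. Using the standard binomial factorial moments $\EE{X(X-1)} = n(n-1)p^2$ and $\EE{X(X-1)(X-2)} = n(n-1)(n-2)p^3$, combined with the polynomial identity $X^2(X-1) = X(X-1)(X-2) + 2X(X-1)$, this yields the closed forms $\EE{XT} = n(n-1)p^2$ and $\EE{X^2 T} = n(n-1)p^2\bigl((n-2)p+2\bigr)$.

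The final step is to apply Cauchy--Schwarz with $A = X\sqrt{T}$ and $B = \sqrt{T}$ to obtain $\EE{XT}^2 \leq \EE{X^2 T} \cdot \EE{T}$. Substituting the moments computed above and rearranging (the edge cases $n=1$ or $p=0$ are trivial, as both sides of the target inequality vanish) gives
\[
\EE{T} \geq \frac{n(n-1)p^2}{(n-2)p + 2} \geq \frac{n(n-1)p^2}{np + 2},
\]
where the last step uses $(n-2)p \leq np$. The main obstacle is really just locating the right Cauchy--Schwarz split; once one recognizes that $\EE{XT}$ coincides with the target numerator $n(n-1)p^2$, everything else reduces to routine factorial-moment bookkeeping.
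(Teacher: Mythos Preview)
Your argument is correct and genuinely different from the paper's. The paper substitutes $s=1-p$, divides through by $1-s$, and reduces the claim to showing $1+s+\dots+s^{n-1}\le\frac{n(1-s)+2n}{2+n(1-s)}$; this is then verified by defining $h(s)=(2+n(1-s))(s+\dots+s^{n-1})$, checking that $h'(s)\ge 0$ on $[0,1]$, and evaluating $h(1)=2(n-1)$. Your route via $T=(X-1)_+$ for $X\sim\textnormal{Binom}(n,p)$ and the Cauchy--Schwarz step $\EE{XT}^2\le\EE{X^2T}\,\EE{T}$ is more conceptual: it explains the inequality as a moment inequality rather than as a raw calculus fact, and it even delivers the slightly sharper bound $\EE{T}\ge n(n-1)p^2/((n-2)p+2)$ before you relax the denominator. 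On the other hand, the paper's algebraic reduction has the advantage of proving the companion upper bound (Lemma~\ref{lem:n_s_identity_upperbd}) in the same breath, since both directions come from the two-sided inequality~\eqref{eqn:series_bound_step}; your Cauchy--Schwarz argument does not immediately yield that matching upper bound.
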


Next, we will use Lemma~\ref{lem:properties_of_Z} to relate $\Delta$ and $\widehat\Delta$.
By Chebyshev's inequality, conditional on $X_1,\dots,X_n$, with probability at least $1-\delta/4$ we have
\[Z \geq \EEst{Z}{X_1,\dots,X_n} - \sqrt{\frac{\varst{Z}{X_1,\dots,X_n}}{\delta/4}} \geq \EEst{Z}{X_1,\dots,X_n} - \sqrt{\frac{N_{\geq 2} + 2\EEst{Z}{X_1,\dots,X_n}}{\delta/4}},\]
which can be relaxed to
\[\EEst{Z}{X_1,\dots,X_n}  \leq 2Z + 4\sqrt{N_{\geq 2}/\delta} + 8/\delta.\]
Marginalizing over $X_1,\dots,X_n$, this bound holds with  probability at least $1-\delta/4$.
Moreover, again applying Chebyshev's inequality,  with probability at least $1-\delta/4$ we have
\[\EEst{Z}{X_1,\dots,X_n}  \geq \EE{Z} - \sqrt{\frac{\var{\EEst{Z}{X_1,\dots,X_n}}}{\delta/4}} \geq  \EE{Z} - \sqrt{\frac{2\EE{Z}}{\delta/4}},\]
which can be relaxed to
\[\EE{Z}\leq 2\EEst{Z}{X_1,\dots,X_n}  + 8/\delta.\]
Combining our bounds, then, we have $\EE{Z}\leq 4Z + 8\sqrt{N_{\geq 2}/\delta} + 24/\delta$ with probability at least $1-\delta/2$.
Since $\PP{\widehat{M}_\gamma\geq M^*_\gamma(P_X)}\geq 1-\delta/2$ by Hoeffding's inequality, this implies that
\[\PP{\widehat{\Delta}\geq \Delta}\geq 1-\delta.\]

Now we verify the coverage properties of $\Ch$. We have
\begin{align*}
&\PP{\mu_P(X_{n+1})\not\in \Ch(X_{n+1})}
=\PP{\left|\mu_P(X_{n+1}) - \mu(X_{n+1})\right| >  (\alpha - \delta-\gamma)^{-1}\widehat\Delta}\\
&\leq \PP{\widehat\Delta<\Delta} + \PP{\left|\mu_P(X_{n+1}) - \mu(X_{n+1})\right| >  (\alpha - \delta-\gamma)^{-1}\Delta}\\
&\leq \PP{\widehat\Delta<\Delta} +\PP{X_{n+1}\not\in\{x^{(1)},\dots,x^{(M^*_\gamma(P_X))}} \\&\hspace{1in}{}+\sum_{m=1}^{M^*_\gamma(P_X)} \PP{X_{n+1} = x^{(m)}, \ \left|\mu_P(X_{n+1}) - \mu(X_{n+1})\right| >  (\alpha - \delta-\gamma)^{-1}\Delta}\\
&\leq \delta + \gamma +\sum_{m=1}^{M^*_\gamma(P_X)} \PP{X_{n+1} = x^{(m)}, \ \left|\mu_P(X_{n+1}) - \mu(X_{n+1})\right| >  (\alpha - \delta-\gamma)^{-1}\Delta}\\
&\leq \delta + \gamma +\sum_{m=1}^{M^*_\gamma(P_X)}  p_m \One{\left|\mu_P(x^{(m)}) - \mu(x^{(m)})\right| >  (\alpha - \delta-\gamma)^{-1}\Delta}\\
&\leq \delta + \gamma + \frac{\sum_{m=1}^{M^*_\gamma(P_X)}  p_m\left|\mu_P(x^{(m)}) - \mu(x^{(m)})\right|}{(\alpha - \delta-\gamma)^{-1}\Delta}\\
&\leq \delta + \gamma + \frac{\Delta}{(\alpha - \delta-\gamma)^{-1}\Delta}=\alpha,
\end{align*}
which verifies the desired coverage guarantee.

\section{Proof of Theorem~\ref{thm:upperbd_length}}
First, we have $\widehat{M}_\gamma\leq M$ almost surely by our assumption on $P_X$. 
Next we need to bound $\EE{Z_+}$.
We have 
\begin{align*}
\EE{Z_-} &\leq \EE{(Z - \EEst{Z}{X_1,\dots,X_n})_-}\textnormal{ since this conditional expectation is nonnegative}\\
&\leq \sqrt{\EE{(Z - \EEst{Z}{X_1,\dots,X_n})^2}}\\
&= \sqrt{\EE{\EEst{(Z - \EEst{Z}{X_1,\dots,X_n})^2}{X_1,\dots,X_n}}}\\
&=\sqrt{\EE{\varst{Z}{X_1,\dots,X_n}}} \\
&\leq \sqrt{\EE{  N_{\geq 2} +  2\EEst{Z}{X_1,\dots,X_n}}}\textnormal{ by Lemma~\ref{lem:properties_of_Z}}\\
&=\sqrt{ \EE{N_{\geq 2}} + 2\EE{Z}}.\end{align*}
We then have
\[\EE{Z_+} = \EE{Z}+ \EE{Z_-} \leq \EE{Z} + \sqrt{2\EE{Z}+\EE{N_{\geq 2}}} \leq 1.5\EE{Z} + 1  + \sqrt{\EE{N_{\geq 2}}}.\]
Next we need a lemma:
\begin{lemma}\label{lem:n_s_identity_upperbd}
For all $n\geq 1$ and $p\in[0,1]$,
$np - 1 + (1-p)^n \leq \frac{n^2p^2}{1+np}$.
\end{lemma}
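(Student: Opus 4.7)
My plan is to reduce the inequality to a cleaner auxiliary statement by clearing the denominator, and then verify that statement by a one-line monotonicity argument in $p$.

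First, since $1 + np > 0$ for $p \in [0,1]$ and $n \geq 1$, multiplying both sides by $1 + np$ shows that the claim is equivalent to
\[
(np - 1 + (1-p)^n)(1 + np) \leq n^2 p^2.
\]
Expanding $(np - 1)(1+np) = n^2 p^2 - 1$ and canceling the $n^2 p^2$ on each side, the inequality collapses to the much simpler statement
\[
(1-p)^n (1 + np) \leq 1.
\]

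Now I would establish this auxiliary inequality directly. Let $h(p) = (1-p)^n(1 + np)$. Then $h(0) = 1$, and differentiating gives
\[
h'(p) = -n(1-p)^{n-1}(1+np) + n(1-p)^n
= n(1-p)^{n-1}\bigl[-(1+np) + (1-p)\bigr]
= -n(n+1)\,p\,(1-p)^{n-1}.
\]
For $p \in [0,1]$ and $n \geq 1$ this is nonpositive, so $h$ is nonincreasing on $[0,1]$ and therefore $h(p) \leq h(0) = 1$, as needed.

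I do not foresee any real obstacle here: the only nontrivial move is noticing the algebraic cancellation that turns the stated two-sided rational inequality into the classical Bernoulli-flavored estimate $(1-p)^n(1+np) \leq 1$. After that, the derivative computation is immediate and the boundary value $h(0) = 1$ closes the argument. (As a side remark, this also gives equality precisely at $p = 0$, mirroring the equality in Lemma~\ref{lem:n_s_identity} at the same point.)
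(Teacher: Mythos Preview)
Your argument is correct. The algebraic reduction to $(1-p)^n(1+np)\leq 1$ is clean, and the derivative computation is right: $h'(p)=-n(n+1)p(1-p)^{n-1}\leq 0$ on $[0,1]$, so $h(p)\leq h(0)=1$.

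The paper proceeds differently. It treats Lemmas~\ref{lem:n_s_identity} and~\ref{lem:n_s_identity_upperbd} together: substituting $s=1-p$, dividing through by $1-s$ to bring in the geometric sum $1+s+\dots+s^{n-1}$, and then for this upper bound rewriting
\[
1+s+\dots+s^{n-1}=\frac{n+(1+s+\dots+s^{n-1})-ns^n}{1+n(1-s)}\geq\frac{n}{1+n(1-s)},
\]
the last step using $s^i\geq s^n$ for $i<n$. So the paper's proof is purely algebraic (no calculus) and is set up to handle the companion lower bound in the same framework, while your route isolates the upper bound, reduces it to the classical Bernoulli-type estimate, and dispatches it with a one-variable monotonicity check. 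Your version is shorter and more transparent for this lemma alone; the paper's version pays a small overhead in exchange for unifying the two bounds.
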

Combined with the calculation of $\EE{Z}$ in Lemma~\ref{lem:properties_of_Z}, we have
\begin{align*}
\EE{Z}&\leq \sum_{m=1}^{M}(\mu(x^{(m)})-\mu_P(x^{(m)}))^2 \cdot \frac{n^2 p_m^2}{1+np_m}\\
&\leq \sum_{m=1}^{M} p_m \cdot (\mu(x^{(m)})-\mu_P(x^{(m)}))^2 \cdot \frac{n^2 \cdot \eta/M}{1+n \cdot \eta/M}\\
&= \frac{\eta n^2}{M+\eta n} \cdot \Ep{P_X}{(\mu_P(X)-\mu(X))^2}\\
&\leq (\text{err}_\mu)^2 \cdot \frac{\eta n^2}{M+\eta n},
\end{align*}
since we have assumed that $P_X$ is supported on $\{x^{(1)},\dots,x^{(M)}\}$ and that $\Pp{P_X}{X=x^{(m)}}\leq \eta/M$ for all $m$, where we must have $\eta\geq 1$.
Furthermore, we have
\begin{align*}
&\EE{N_{\geq 2}} 
= \sum_{m=1}^{M}\PP{n_m\geq 2} 
\leq \sum_{m=1}^{M}\EE{(n_m-1)_+}\\
&= \sum_{m=1}^{M} n\cdot \Pp{P_X}{X=x^{(m)}} - 1 + \left(1-\Pp{P_X}{X=x^{(m)}}\right)^n\textnormal{ as calculated as in the proof of Lemma~\ref{lem:properties_of_Z}}\\
&\leq \sum_{m=1}^{M} n\cdot \eta/M - 1 + (1-\eta /M)^n\\
&\leq \sum_{m=1}^{M} \frac{n^2(\eta/M)^2}{1+n\eta /M}\textnormal{ by Lemma~\ref{lem:n_s_identity_upperbd}}\\
&=\frac{\eta^2 n^2}{M+\eta n}.
\end{align*}
We also have $N_{\geq 2}\leq M$ almost surely, and so combining these two bounds, $\EE{N_{\geq 2}}\leq \min\{\frac{\eta^2 n^2}{M},M\}$.
Combining everything, then,
\[\EE{Z_+} \leq 1.5 (\textnormal{err}_\mu)^2 \cdot\frac{\eta n^2}{M+\eta n} + 1 +  \sqrt{\min\left\{\frac{\eta^2 n^2}{M},M\right\}}. \]
Plugging these calculations into the definition of $\widehat\Delta$, we obtain
\begin{align*}
&\EE{\widehat{\Delta}} = \EE{\sqrt{\frac{2\widehat{M}_\gamma+n}{n(n-1)}}\cdot \sqrt{4Z_+ +  8\sqrt{N_{\geq 2}/\delta}+24/\delta}}\\
& \leq \EE{\sqrt{\frac{2M+n}{n(n-1)}}\cdot \sqrt{4Z_+ + 8\sqrt{N_{\geq 2}/\delta}+24/\delta}}\\
& \leq \sqrt{\frac{2M+n}{n(n-1)}}\cdot \sqrt{4\EE{Z_+} + 8\sqrt{\EE{N_{\geq 2}}/\delta}+24/\delta}\\
& \leq \sqrt{\frac{2M+n}{n(n-1)}}\cdot \sqrt{4\left(1.5 (\textnormal{err}_\mu)^2 \cdot\frac{\eta n^2}{M+\eta n} + 1 +  \sqrt{\min\left\{\frac{\eta^2 n^2}{M},M\right\}}\right) +  8\sqrt{\min\left\{\frac{n^2}{M},M\right\}\cdot  1/\delta}+24/\delta}\\
& \leq \sqrt{\frac{2M+n}{n(n-1)}}\cdot\left[ \sqrt{6 (\textnormal{err}_\mu)^2 \cdot\frac{\eta n^2}{M+\eta n} } +  \sqrt{4(1+ 2/\sqrt{\delta})\sqrt{\min\left\{\frac{\eta^2 n^2}{M},M\right\}}} + \sqrt{4+24/\delta}\right].
\end{align*}
We can assume that $M\leq n^2$ and $n\geq 2$ (as otherwise, the upper bound would be trivial, since we must have $\textnormal{Leb}(\Ch(X_{n+1}))\leq 1$ by construction).
If $M\geq n$, then $\frac{2M+n}{n(n-1)}\leq \frac{6M}{n^2}$ and the above simplifies to
\[\EE{\widehat{\Delta}}
\leq 6\sqrt{\eta}\cdot \textnormal{err}_\mu +  \sqrt{\frac{6(4+24/\delta)M}{n^2}}+  \sqrt{24\eta (1+ 2/\sqrt{\delta})}\sqrt[4]{\frac{M}{n^2}},\]
and since we assume $M\leq n^2$, we therefore have
\begin{equation}\label{eqn:delta_bound}\EE{\widehat{\Delta}}
\leq 6\sqrt{\eta}\cdot \textnormal{err}_\mu + \left( \sqrt{6(4+24/\delta)} +\sqrt{24\eta (1+ 2/\sqrt{\delta})}\right) \cdot \sqrt[4]{\frac{M}{n^2}}.\end{equation}

If instead $M<n$, then $\frac{2M+n}{n(n-1)}\leq \frac{6}{n}$ and the above bound on $\EE{\widehat\Delta}$ simplifies to
\[\EE{\widehat{\Delta}} \leq  6\cdot\textnormal{err}_\mu + \sqrt{\frac{6}{n}}\cdot\left[   \sqrt{4(1+2/\sqrt{\delta})\sqrt{M}} + \sqrt{4+24/\delta}\right],\]
which again yields the same bound~\eqref{eqn:delta_bound} since $M\geq 1$ and $\eta\geq 1$.
Finally, by definition of $\Ch(X_{n+1})$, we have
\[\EE{\textnormal{Leb}(\Ch(X_{n+1}))} \leq \EE{\widehat\Delta}\cdot \frac{2}{\alpha-\delta-\gamma},\]
which completes the proof for $c$ chosen appropriately as a function of $\alpha,\delta,\gamma,\eta$.

\section{Proofs of lemmas}

\subsection{Proof of Lemma~\ref{lem:median_split}}
Let $x_{\textnormal{med}}$ be the median of $Q$. Define
\[
q_< =\Pp{Q}{X<x_{\textnormal{med}}}, \ q_> = \Pp{Q}{X>x_{\textnormal{med}}},\]
and note that $q_<,q_>\in[0,0.5]$.
For $X\sim Q$, let $Q_{<}$ be the distribution of $X$ conditional on $X<x_{\textnormal{med}}$
and let $Q_>$ be the distribution of $X$ conditional on $X>x_{\textnormal{med}}$. Then we can write
\[Q = q_<\cdot Q_< + (1-q_<-q_>)\cdot \delta_{x_{\textnormal{med}}} + q_>\cdot Q_>,\]
where $\delta_t$ denotes the point mass distribution at $t$.
Now define 
\[Q_0 = 2q_<\cdot Q_< + (1 - 2q_<)\cdot  \delta_{x_{\textnormal{med}}} \]
and 
\[Q_1= 2q_>\cdot Q_> + (1 - 2q_>)\cdot  \delta_{x_{\textnormal{med}}} .\]
Then clearly $Q = 0.5 Q_0 + 0.5 Q_1$. Next let $\mu_0,\mu_1$ be the means of these two distributions, satisfying $\frac{\mu_0+\mu_1}{2} = \mu$
where $\mu$ is the mean of $Q$, and let $\sigma^2_0,\sigma^2_1$ be the variances of these two distributions.
By the law of total variance, we have
\begin{align*}\sigma^2
&= \var{0.5\delta_{\mu_0}+0.5\delta_{\mu_1}} + \EE{0.5\delta_{\sigma^2_0}+0.5\delta_{\sigma^2_1}}\\
&=\frac{(\mu_1-\mu_0)^2}{4}+0.5\sigma^2_0+0.5\sigma^2_1.
\end{align*}
Next, $Q_0$ is a distribution supported on $[0,x_{\textnormal{med}}]$ with mean $\mu_0$, so its variance is bounded as
\[\sigma^2_0\leq \mu_0(x_{\textnormal{med}}-\mu_0),\]
where the maximum is attained if all the mass is placed on the endpoints $0$ or $x_{\textnormal{med}}$. Similarly,
$Q_1$ is a distribution supported on $[x_{\textnormal{med}},1]$ with mean $\mu_1$, so its variance is bounded as
\[\sigma^2_1\leq (1-\mu_1)(\mu_1-x_{\textnormal{med}}).\]
Using the fact that $\frac{\mu_0+\mu_1}{2} = \mu$, we can simplify to 
\begin{multline*}\sigma^2_0 + \sigma^2_1\leq \mu_0(x_{\textnormal{med}}-\mu_0)+(1-\mu_1)(\mu_1-x_{\textnormal{med}})\\ =  \mu(x_{\textnormal{med}}-\mu_0) +(1-\mu)(\mu_1-x_{\textnormal{med}})
-0.5(\mu_1-\mu_0)^2.\end{multline*}
Therefore, we have
\begin{multline*}
\sigma^2 = \frac{(\mu_1-\mu_0)^2}{4} + 0.5 \sigma^2_0+0.5\sigma^2_1
\leq 0.5 \mu(x_{\textnormal{med}}-\mu_0) +0.5(1-\mu)(\mu_1-x_{\textnormal{med}})\\
= 0.5 (2\mu-1)x_{\textnormal{med}} - 0.5\mu\mu_0 + 0.5(1-\mu)\mu_1
= 0.5 (2\mu-1)(x_{\textnormal{med}} - \mu) + 0.25(\mu_1-\mu_0).
\end{multline*}
Next, $|2\mu-1|\leq 1$ since $\mu\in[0,1]$, and $|x_{\textnormal{med}} - \mu| \leq 0.5|\mu_1-\mu_0|$ since $\mu_0\leq x_{\textnormal{med}}\leq \mu_1$
and $\frac{\mu_0+\mu_1}{2} = \mu$. Therefore, $\sigma^2\leq 0.5(\mu_1-\mu_0)$, proving the lemma.

\subsection{Proof of Lemma~\ref{lem:XZ_TV}}

First we need a supporting lemma.
\begin{lemma}\label{lem:binom_KL}
For any $N\geq 1$ and any $\eps\in[0,0.5]$,
\[\dkl\Big(0.5\cdot \textnormal{Binom}(N,0.5 + \eps) + 0.5\cdot \textnormal{Binom}(N,0.5- \eps) \ \big\| \ \textnormal{Binom}(N,0.5)\Big)\leq 8N(N-1) \eps^4.\]
\end{lemma}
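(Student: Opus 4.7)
The plan is to pass from KL to the chi-squared divergence via Jensen's inequality, compute the chi-squared in closed form via the second-moment (Ingster-Suslina) method, and then bound the result using the power series for $\cosh$ rather than a loose exponential bound on each binomial. The first step is the observation (by concavity of $\log$) that
\[\dkl(P\|Q)=\Ep{P}{\log(P/Q)}\leq \log\Ep{P}{P/Q}=\log\bigl(1+\chi^2(P\|Q)\bigr),\]
writing $P$ for the mixture and $Q=\textnormal{Binom}(N,\tfrac12)$; it therefore suffices to show $1+\chi^2(P\|Q)\leq \exp(8N(N-1)\eps^4)$.

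Next I would compute $1+\chi^2(P\|Q)$ in closed form. Decomposing $P=\tfrac12 P^++\tfrac12 P^-$ with $P^{\pm}=\textnormal{Binom}(N,\tfrac12\pm\eps)$ and expanding the square,
\[1+\chi^2(P\|Q)=\tfrac14\sum_{a,a'\in\{\pm 1\}}\Ep{Q}{\frac{dP^a}{dQ}\cdot\frac{dP^{a'}}{dQ}}.\]
A one-line binomial-theorem calculation (the ratio $dP^a/dQ$ at $k$ is $(1+2a\eps)^k(1-2a\eps)^{N-k}$, so the inner expectation collapses to $((1+2a\eps)(1+2a'\eps)/2+(1-2a\eps)(1-2a'\eps)/2)^N$) yields $\Ep{Q}{(dP^a/dQ)(dP^{a'}/dQ)}=(1+4aa'\eps^2)^N$, and averaging over signs gives
\[1+\chi^2(P\|Q)=\tfrac12\bigl[(1+4\eps^2)^N+(1-4\eps^2)^N\bigr]=\sum_{j\geq 0}\binom{N}{2j}(4\eps^2)^{2j}.\]

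To bound this by the target exponential, I would use the elementary estimate $\binom{N}{2j}\leq (N(N-1))^j/(2j)!$---obtained by pairing the $2j$ falling-factorial factors as $(N-2k)(N-2k-1)\leq N(N-1)$---together with $(2j)!\geq 2^j j!$, giving
\[1+\chi^2(P\|Q)\leq \sum_{j\geq 0}\frac{\bigl(16 N(N-1)\eps^4\bigr)^j}{(2j)!}=\cosh\!\bigl(4\eps^2\sqrt{N(N-1)}\bigr)\leq \exp\!\bigl(8N(N-1)\eps^4\bigr).\]
Taking logs finishes the argument.

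The main obstacle to avoid is that the usual bound $\dkl(P\|Q)\leq \chi^2(P\|Q)$ is simply too weak: as $\eps\to \tfrac12$ the chi-squared grows like $2^{N-1}$, so no polynomial-in-$N$ bound on $\chi^2$ alone is possible. The Jensen-based strengthening $\dkl\leq \log(1+\chi^2)$ is what tames this exponential growth, and the pairing trick $\binom{N}{2j}\leq (N(N-1))^j/(2j)!$ is what converts the mixture's second moment into a $\cosh$ whose logarithm is of the desired order $N^2\eps^4$---rather than the naive $N\eps^2$ one would get by convexity of KL in its first argument, which is the natural-looking approach that just barely fails to give the required fourth-power dependence on $\eps$.
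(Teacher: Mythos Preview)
Your proposal is correct and follows essentially the same route as the paper: both arguments use Jensen's inequality to bound $\dkl(P\|Q)$ by $\log\Ep{Q}{(dP/dQ)^2}=\log(1+\chi^2(P\|Q))$, compute this second moment exactly as $\tfrac12[(1+4\eps^2)^N+(1-4\eps^2)^N]$, and then control the resulting even-binomial series via the same pairing bound $\binom{N}{2j}\leq (N(N-1))^j/(2^j j!)$ to reach $\exp(8N(N-1)\eps^4)$. The only cosmetic difference is that you compute the second moment via the Ingster--Suslina cross-term decomposition over mixture components (and phrase the final bound through $\cosh$), whereas the paper expands $(f_1/f_0)^2$ directly; these are equivalent calculations of the same quantity.
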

\begin{proof}[Proof of Lemma~\ref{lem:binom_KL}]
Let $f_0$ be the probability mass function of the $\textnormal{Binom}(N,0.5)$
distribution, and let $f_1$ be the 
probability mass function of the mixture
 $0.5\cdot \textnormal{Binom}(N,0.5 + \eps) + 0.5\cdot \textnormal{Binom}(N,0.5- \eps)$. Then we would like to bound
 $\dkl(f_1\|f_0)$.
We calculate the ratio
\begin{align*}\frac{f_1(k)}{f_0(k)} &= \frac{0.5\cdot{N\choose k}(0.5+\eps)^k(0.5-\eps)^{N-k} + 0.5\cdot{N\choose k}(0.5-\eps)^k(0.5+\eps)^{N-k}}{{N\choose k}(0.5)^N}\\
&= \frac{(1+2\eps)^k(1-2\eps)^{N-k} + (1-2\eps)^k(1+2\eps)^{N-k}}{2}.
\end{align*}
Therefore, it holds that
\begin{align*}
&\Ep{\textnormal{Binom}(N,0.5)}{\left(\frac{f_1(X)}{f_0(X)}\right)^2}\\
&= \Ep{\textnormal{Binom}(N,0.5)}{\left(\frac{(1+2\eps)^X(1-2\eps)^{N-X} + (1-2\eps)^X(1+2\eps)^{N-X}}{2}\right)^2}\\
&=\Ep{\textnormal{Binom}(N,0.5)}{\frac{(1+2\eps)^{2X}(1-2\eps)^{2N-2X} + (1-2\eps)^{2X}(1+2\eps)^{2N-2X}
+ 2(1-4\eps^2)^N}{4}}\\
&=\frac{(1-2\eps)^{2N}\Ep{\textnormal{Binom}(N,0.5)}{\left(\frac{1+2\eps}{1-2\eps}\right)^{2X}} + (1+2\eps)^{2N}\Ep{\textnormal{Binom}(N,0.5)}{\left(\frac{1-2\eps}{1+2\eps}\right)^{2X}}
+ 2(1-4\eps^2)^N}{4}\\
&=\frac{(1-2\eps)^{2N}\Ep{\textnormal{Bern}(0.5)}{\left(\frac{1+2\eps}{1-2\eps}\right)^{2X}}^N + (1+2\eps)^{2N}\Ep{\textnormal{Bern}(0.5)}{\left(\frac{1-2\eps}{1+2\eps}\right)^{2X}}^N
+ 2(1-4\eps^2)^N}{4}\\
&=\frac{(1-2\eps)^{2N}\left[0.5\left(\frac{1+2\eps}{1-2\eps}\right)^2 + 0.5\right]^N + (1+2\eps)^{2N}\left[0.5\left(\frac{1-2\eps}{1+2\eps}\right)^2+0.5\right]^N
+ 2(1-4\eps^2)^N}{4}\\
&=\frac{\left[0.5(1+2\eps)^2 + 0.5(1-2\eps)^2\right]^N +\left[0.5(1-2\eps)^2+0.5(1+2\eps)^2\right]^N
+ 2(1-4\eps^2)^N}{4}\\
&=\frac{(1+4\eps^2)^N
+ (1-4\eps^2)^N}{2}\\
&= 1 + \sum_{k\geq 1} {N\choose 2k} (4\eps^2)^{2k}\\
&= 1 + \sum_{k\geq 1} \frac{N(N-1)\dots(N-2k+2)(N-2k+1)}{(2k)!} (4\eps^2)^{2k}\\
&\leq 1 + \sum_{k\geq 1} \frac{(N(N-1))^k}{2^k k!} (4\eps^2)^{2k}\\
&\leq e^{8\eps^4N(N-1)}.
\end{align*}
Applying Jensen's inequality, we then have
\begin{multline*}
 \dkl(f_1\|f_0)
 =\sum_{k=0}^n f_1(k) \log\left(\frac{f_1(k)}{f_0(k)}\right)
 = \Ep{f_1}{\log\left(\frac{f_1(X)}{f_0(X)}\right)}
\leq \log\left(\Ep{f_1}{\frac{f_1(X)}{f_0(X)}}\right)\\
= \log\left(\Ep{\textnormal{Binom}(N,0.5)}{\left(\frac{f_1(X)}{f_0(X)}\right)^2}\right)
\leq\log\left(e^{8\eps^4N(N-1)}\right)
=8\eps^4N(N-1).
\end{multline*}
\end{proof}

Now we turn to the proof of Lemma~\ref{lem:XZ_TV}.
Let $p_m = \PP{X \in \Xcal_m}$ for each $m=1,2,\dots$.
 Define a distribution $P_0'$ on $(W,Z)\in\mathbb{N}\times\{0,1\}$ as:
\[\textnormal{Draw $W\sim \sum_{m=1}^\infty p_m \delta_m$, and draw $Z\sim\textnormal{Bernoulli}(0.5)$, independently from $W$.}\]
and for any signs $a_1,a_2,\dots\in\{\pm 1\}$, define a distribution $P_a'$ on $(W,Z)\in\mathbb{N}\times\{0,1\}$ as:
\[\textnormal{Draw $W\sim \sum_{m=1}^\infty p_m \delta_m$, and conditional on $W$, draw $Z|W=m\sim\textnormal{Bernoulli}(0.5 + a_m \cdot\eps_m)$.}\]
Then define $\tilde{P}_0' = (P_0')^n$ and define $\tilde{P}_1'$ as the following mixture distribution.
\begin{itemize}
\item Draw $A_1,A_2,\dots\iidsim\textnormal{Unif}\{\pm 1\}$.
\item Conditional on $A_1,A_2,\dots$, draw $(W_1,Z_1),\dots,(W_n,Z_n)\iidsim P_A'$.
\end{itemize}
Note that $(X_1,Z_1),\dots,(X_n,Z_n) \sim \tilde{P}_0$ can be drawn by first drawing $(W_1,Z_1),\dots,(W_n,Z_n) \sim \tilde{P}_0'$ and then  drawing $X_i|W_i \sim P_{X|X \in \Xcal_{W_i}}$ for each $i$. Similarly, $(X_1,Z_1),\dots,(X_n,Z_n) \sim \tilde{P}_1$ is equivalent to first drawing $(W_1,Z_1),\dots,(W_n,Z_n) \sim \tilde{P}_1'$ and then drawing $X_i|W_i \sim P_{X|X \in \Xcal_{W_i}}$ for each $i$. This implies $\dtv(\tilde{P}_1||\tilde{P}_0) \leq \dtv(\tilde{P}_1'||\tilde{P}_0')$.

Now we can calculate the probability mass function of $\tilde{P}_0'$ as
\[\tilde{P}_0'\big((w_1,z_1),\dots,(w_n,z_n)\big)=\prod_{i=1}^n \left(p_{w_i}\cdot 0.5\right),\]
and for $\tilde{P}_1'$ as
\[\tilde{P}_1'\big((w_1,z_1),\dots,(w_n,z_n)\big)=\Ep{A_i\iidsim\textnormal{Unif}\{\pm 1\}}{ \prod_{i=1}^n \left(p_{w_i}\cdot (0.5+A_{w_i}\eps_m)^{z_i} \cdot (0.5-A_{w_i}\eps_m)^{1-z_i}\right)}.\]
Defining summary statistics
\[n_m = \sum_{i=1}^n \One{w_i=m}\textnormal{ and }
k_m = \sum_{i=1}^n \One{w_i=m, z_i = 1},\]
we can rewrite the above as
\[\tilde{P}_0'\big((w_1,z_1),\dots,(w_n,z_n)\big)=\prod_{m=1}^{\infty} p_m^{n_m}\cdot 0.5^{n_m},\]
and
\begin{align*}
\tilde{P}_1'\big((w_1,z_1),\dots,(w_n,z_n)\big)
&=\Ep{A_i\iidsim\textnormal{Unif}\{\pm 1\}}{\prod_{m=1}^{\infty} p_m^{n_m}\cdot (0.5+A_m\eps_m)^{k_m}
\cdot (0.5-A_m\eps_m)^{n_m-k_m}}\\
&=\prod_{m=1}^{\infty} p_m^{n_m}\cdot \frac{1}{2} \sum_{a_m\in\{\pm 1\}} (0.5+a_m\eps_m)^{k_m}
\cdot (0.5-a_m\eps_m)^{n_m-k_m}\\
\end{align*}

We then calculate
\begin{align*}
\dkl(\tilde{P}_1'||\tilde{P}_0')
&=\Ep{\tilde{P}_1}{
\log\left(\frac{\tilde{P}_1'\big((W_1,Z_1),\dots,(W_n,Z_n)\big)}{\tilde{P}_0'\big((W_1,Z_1),\dots,(W_n,Z_n)\big)}\right)}\\
&=\Ep{\tilde{P}_1'}{
\log\left(\frac{\prod_{m=1}^{\infty} p_m^{N_m}\cdot \frac{1}{2} \sum_{a_m\in\{\pm 1\}} (0.5+a_m\eps_m)^{K_m}
\cdot (0.5-a_m\eps_m)^{N_m-K_m}}{\prod_{m=1}^{\infty} p_m^{N_m} \cdot (0.5)^{N_m}}\right)}\\
&=\sum_{m=1}^{\infty}\Ep{\tilde{P}_1'}{
\log\left(  \frac{ \frac{1}{2}\sum_{a_m\in\{\pm 1\}} (0.5+a_m \eps_m)^{K_m} \cdot (0.5-a_m \eps_m)^{N_m-K_m}}{ (0.5)^{N_m}}\right)}\\
&=\sum_{m=1}^{\infty}\Ep{\tilde{P}_1'}{\Ep{\tilde{P}_1'}{
\log\left(  \frac{ \frac{1}{2}\sum_{a_m\in\{\pm 1\}} (0.5+a_m \eps_m)^{K_m} \cdot (0.5-a_m \eps_m)^{N_m-K_m}}{ (0.5)^{N_m}}\right)\,\bigg\vert\, N_m}},
\end{align*}
where 
\[N_m = \sum_{i=1}^n \One{W_i=m}\textnormal{ and }
K_m = \sum_{i=1}^n \One{W_i=m, Z_i = 1},\]

Next, we calculate the conditional expectation in the last expression above. If $N_m=0$ then trivially it is equal to $\log(1) = 0$. If $N_m\geq 1$,
then under $\tilde{P}_1'$, we can see that 
\[K_m\mid N_m \sim 0.5\cdot \textnormal{Binom}(N_m,0.5 + \eps_m) + 0.5\cdot \textnormal{Binom}(N_m,0.5- \eps_m),\]
and therefore,
\begin{multline*}\Ep{\tilde{P}_1'}{
\log\left(  \frac{ \frac{1}{2}\sum_{a_m\in\{\pm 1\}} (0.5+a_m \eps_m)^{K_m} \cdot (0.5-a_m \eps_m)^{N_m-K_m}}{ (0.5)^{N_m}}\right)\,\bigg\vert\, N_m}\\
= \dkl\Big(0.5\cdot \textnormal{Binom}(N_m,0.5 + \eps_m) + 0.5\cdot \textnormal{Binom}(N_m,0.5- \eps_m)\ \big\| \ 
\textnormal{Binom}(N_m,0.5)\Big) \leq 8N_m(N_m-1)\eps_m^4,\end{multline*}
where the last step applies Lemma~\ref{lem:binom_KL}.
Therefore,
\begin{align*}
\dkl(\tilde{P}_1'||\tilde{P}_0')
&\leq \sum_{m=1}^{\infty} \Ep{\tilde{P}_1'}{8N_m(N_m-1)\eps_m^4}\\
&= 8 \sum_{m=1}^{\infty} \eps_m^4 \Ep{\tilde{P}_1'}{N_m^2-N_m}\\
&= 8 \sum_{m=1}^{\infty}\eps_m^4 \left(\big(np_m(1-p_m) + n^2p_m^2\big) - np_m\right)\\
&= 8 \cdot  n(n-1) \sum_{m=1}^{\infty} \eps_m^4 p_m^2,
\end{align*}
since $N_m\sim\textnormal{Binom}(n,p_m)$ by definition. Applying Pinsker's inequality and $\dtv(\tilde{P}_1||\tilde{P}_0) \leq \dtv(\tilde{P}_1'||\tilde{P}_0')$ completes the proof.

\subsection{Proof of Lemma~\ref{lem:properties_of_Z}}
Define
\[Z_m = \begin{cases} (n_m-1)\cdot\big( (\bar{y}_m - \mu(x^{(m)}))^2 - n_m^{-1}s_m^2\big), & n_m\geq 2, \\ 0, & n_m=0\textnormal{ or }1.\end{cases}\]
Then $Z=\sum_{m=1}^{\infty}Z_m$. 
 Now we calculate the conditional mean and variance.
Conditional on $X_1,\dots,X_n$, $\bar{y}_m$ and $s^2_m$ are the sample mean and sample variance of $n_m$ i.i.d.~draws from a distribution
with mean $\mu_P(x^{(m)})$ and variance $\sigma^2_P(x^{(m)})$,  supported on $[0,1]$, where we let $\sigma^2_P(x^{(m)})$ be the variance of the distribution of $Y|X=x^{(m)}$, under the joint
distribution $P$. For any $m$ with $n_m \geq 2$, we therefore have
\[\EEst{\bar{y}_m}{X_1,\dots,X_n} = \mu_P(x^{(m)}), \ \varst{\bar{y}_m}{ X_1,\dots,X_n} = n_m^{-1}\sigma^2_P(x^{(m)}) =  \EEst{n_m^{-1}s_m^2}{X_1,\dots,X_n} ,\]
and so
\begin{multline*}\EEst{(\bar{y}_m - \mu(x^{(m)}))^2 - n_m^{-1}s_m^2}{X_1,\dots,X_n}\\ = n_m^{-1}\sigma^2_P(x^{(m)}) + (\mu_P(x^{(m)})-\mu(x^{(m)}))^2 - n_m^{-1}\sigma^2_P(x^{(m)}) = (\mu_P(x^{(m)})-\mu(x^{(m)}))^2.\end{multline*}
Next, we have $(n_1,\dots,n_M)\sim\textnormal{Multinom}(n,p)$, which implies that marginally $n_m\sim \textnormal{Binom}(n,p_m)$ and so
\[\EE{(n_m-1)_+} = \EE{n_m-1 + \One{n_m=0}} = np_m - 1 + (1-p_m)^n.\]
Combining these calculations completes the 
proof for the expected value $\EE{Z}$ and conditional expected value $\EEst{Z}{X_1,\dots,X_n}$.

Next, we calculate conditional and marginal variance. We have
\begin{align*}
&\varst{(\bar{y}_m - \mu(x^{(m)}))^2 - n_m^{-1}s_m^2}{X_1,\dots,X_n}\\
&=\varst{(\bar{y}_m - \mu(x^{(m)}))^2 - n_m^{-1}s_m^2 - (\mu_P(x^{(m)})-\mu(x^{(m)}))^2}{X_1,\dots,X_n}\\
&\leq \EEst{\left((\bar{y}_m - \mu(x^{(m)}))^2 - n_m^{-1}s_m^2 - (\mu_P(x^{(m)})-\mu(x^{(m)}))^2\right)^2}{X_1,\dots,X_n}\\
&= \EEst{\left((\bar{y}_m - \mu_P(x^{(m)}))^2+ 2(\bar{y}_m - \mu_P(x^{(m)}))(\mu_P(x^{(m)})-\mu(x^{(m)})) - n_m^{-1}s_m^2\right)^2}{X_1,\dots,X_n}\\
&\leq 4 \EEst{\left((\bar{y}_m - \mu_P(x^{(m)}))\right)^4}{X_1,\dots,X_n} \\
&\hspace{1in}{}+ 2\EEst{\left(2(\bar{y}_m - \mu_P(x^{(m)}))(\mu_P(x^{(m)})-\mu(x^{(m)}))\right)^2}{X_1,\dots,X_n} \\
&\hspace{2in}{}+4\EEst{\left(n_m^{-1}s_m^2\right)^2}{X_1,\dots,X_n},
\end{align*}
where the last step holds since $(a+b+c)^2\leq 4a^2+2b^2+4c^2$ for any $a,b,c$.
Now we bound each term separately. First, we have
\begin{align*}
& \EEst{\left((\bar{y}_m - \mu_P(x^{(m)}))\right)^4}{X_1,\dots,X_n} \\
&=\frac{1}{n_m^4}\sum_{\substack{i_1,i_2,i_3,i_4\textnormal{ s.t. }\\X_{i_1}=X_{i_2}=X_{i_3}=X_{i_4}=x^{(m)}}} \EEst{\prod_{k=1}^4(Y_{i_k}-\mu_P(x^{(m)}))}{X_1,\dots,X_n}\\
&=\frac{1}{n_m^4}\left[n_m \cdot \EEst{(Y-\mu_P(x^{(m)}))^4}{X=x^{(m)}} + 3n_m(n_m-1)\cdot\EEst{(Y-\mu_P(x^{(m)}))^2}{X=x^{(m)}}^2\right]\\
&\leq \frac{1}{n_m^4}\left[n_m \cdot \sigma^2_P(x^{(m)}) + 3n_m(n_m-1)\cdot(\sigma^2_P(x^{(m)}))^2\right]\\
&\leq \frac{1}{n_m^4}\left[n_m \cdot \tfrac{1}{4} + 3n_m(n_m-1)\cdot(\tfrac{1}{4})^2\right]=\frac{3n_m+1}{16n_m^3},
\end{align*}
where the second step holds by counting tuples $(i_1,i_2,i_3,i_4)$ where either all four indices are equal, or 
there are two pairs of equal indices (since otherwise, the expected value of the product is zero).
Next,
\begin{align*}
&\EEst{\left(2(\bar{y}_m - \mu_P(x^{(m)}))(\mu_P(x^{(m)})-\mu(x^{(m)}))\right)^2}{X_1,\dots,X_n} \\
&=4(\mu_P(x^{(m)})-\mu(x^{(m)}))^2\EEst{(\bar{y}_m - \mu_P(x^{(m)}))^2}{X_1,\dots,X_n} \\
&=4(\mu_P(x^{(m)})-\mu(x^{(m)}))^2 \cdot n_m^{-1}\sigma^2_P(x^{(m)})\\
&\leq n_m^{-1}(\mu_P(x^{(m)})-\mu(x^{(m)}))^2 .
\end{align*}
Finally, since $s_m^2\leq \frac{n_m}{4(n_m-1)}$ holds deterministically,
\begin{multline*}\EEst{\left(n_m^{-1}s_m^2\right)^2}{X_1,\dots,X_n}
\leq n_m^{-2}\cdot \frac{n_m}{4(n_m-1)} \cdot \EEst{s_m^2}{X_1,\dots,X_n} \\= n_m^{-2}\cdot \frac{n_m}{4(n_m-1)}\cdot \sigma^2_P(x^{(m)})\leq \frac{1}{16n_m(n_m-1)}.\end{multline*}
Combining everything, then,
\begin{multline*}\varst{(\bar{y}_m - \mu(x^{(m)}))^2 - n_m^{-1}s_m^2}{ X_1,\dots,X_n} \\\leq 4\cdot \frac{3n_m+1}{16n_m^3} + 2\cdot n_m^{-1}(\mu_P(x^{(m)})-\mu(x^{(m)}))^2+ 4\cdot \frac{1}{16n_m(n_m-1)} ,
\end{multline*}
and so for $n_m\geq 2$,
\begin{multline*}\varst{Z_m}{X_1,\dots,X_n} \\\leq (n_m-1)^2\cdot\left[4\cdot \frac{3n_m+1}{16n_m^3} + 2\cdot n_m^{-1}(\mu_P(x^{(m)})-\mu(x^{(m)}))^2+ 4\cdot \frac{1}{16n_m(n_m-1)}\right]\\
\leq 1 + 2(n_m-1)\cdot (\mu_P(x^{(m)})-\mu(x^{(m)}))^2 = 1 + 2\EEst{Z_m}{X_1,\dots,X_n} .
\end{multline*}
If instead $n_m=0$ or $n_m=1$ then $Z_m=0$ by definition, and so $\varst{Z_m}{X_1,\dots,X_n}=0$. Therefore, in all cases, we have
\[\varst{Z_m}{X_1,\dots,X_n} \leq\One{n_m\geq 2} +2\EEst{Z_m}{X_1,\dots,X_n} .\] 
It is also clear that, conditional on $X_1,\dots,X_n$, the $Z_m$'s are independent, and so
\[\varst{Z}{X_1,\dots,X_n}=\sum_{m=1}^{\infty}\varst{Z_m}{X_1,\dots,X_n} \leq N_{\geq 2} + 2\EEst{Z}{X_1,\dots,X_n}.\]

Finally, we need to bound $\var{\EEst{Z}{X_1,\dots,X_n}}$. First, we have
\begin{multline*}\var{\EEst{Z_m}{X_1,\dots,X_n}}
= \var{(n_m-1)_+} \cdot (\mu_P(x^{(m)})-\mu(x^{(m)}))^4\\\leq  \var{(n_m-1)_+} \cdot (\mu_P(x^{(m)})-\mu(x^{(m)}))^2,\end{multline*}
and we can calculate
\begin{align*}
&\var{(n_m-1)_+}\\
&=\var{n_m + \One{n_m=0}}\\
&=\var{n_m} + \var{\One{n_m=0}}+2\cov{n_m}{\One{n_m=0}}\\
&=\var{n_m} + \var{\One{n_m=0}} - 2\EE{n_m}\EE{\One{n_m=0}}\textnormal{ since $n_m\cdot\One{n_m=0}=0$ almost surely}\\
&=np_m(1-p_m) + (1-p_m)^n \big(1-(1-p_m)^n\big) - 2np_m (1-p_m)^n.
\end{align*}
Therefore,
\begin{align*}
&2\EE{(n_m-1)_+}  -  \var{(n_m-1)_+}\\
&= 2np_m - 2 + 2(1-p_m)^n - np_m(1-p_m) - (1-p_m)^n \big(1-(1-p_m)^n\big) + 2np_m (1-p_m)^n\\
&= np_m(1+p_m)    + (1-p_m)^n \big(1+2np_m + (1-p_m)^n\big)  - 2\\
&\geq 0,
\end{align*}
where the last step holds since, defining $f(t) = nt(1+t) + (1-t)^n\big(1+2nt+(1-t)^n\big)$, we can see that $f(0)=2$ and $f'(t)\geq 0$ for all $t\in[0,1]$.
This verifies that
\begin{multline*}\var{\EEst{Z_m}{X_1,\dots,X_n}}\leq \var{(n_m-1)_+} \cdot (\mu_P(x^{(m)})-\mu(x^{(m)}))^2\\
\leq 2\EE{(n_m-1)_+} \cdot (\mu_P(x^{(m)})-\mu(x^{(m)}))^2 = 2\EE{Z_m}.\end{multline*}

Next, for any $m\neq m'$,
\begin{align*}
&\cov{\EEst{Z_m}{X_1,\dots,X_n}}{\EEst{Z_{m'}}{X_1,\dots,X_n}}\\
&= \cov{(n_m-1)_+}{(n_{m'}-1)_+} \cdot (\mu_P(x^{(m)})-\mu(x^{(m)}))^2\cdot (\mu_P(x^{(m')})-\mu(x^{(m')}))^2\\
&\leq 0.
\end{align*}
For the last step, we use the fact that $\cov{(n_m-1)_+}{(n_{m'}-1)_+}\leq 0$,
which holds since, conditional on $n_m$, we have $n_{m'} \sim \textnormal{Binom}\left(n-n_m,\frac{p_{m'}}{1 - p_m}\right)$, and so
the distribution of $n_{m'}$ is stochastically smaller whenever $n_m$ is larger.
Therefore,
\[\var{\EEst{Z}{X_1,\dots,X_n}}\leq \sum_{m=1}^{\infty}\var{\EEst{Z_m}{X_1,\dots,X_n}} \leq \sum_{m=1}^{\infty}2\EE{Z_m} =2\EE{Z} .\]

\subsection{Proofs of Lemma~\ref{lem:n_s_identity} and Lemma~\ref{lem:n_s_identity_upperbd}}
Replacing $p$ with $1-s$, equivalently, we need to show that, for all $s\in [0,1]$,
\[\frac{n(n-1)(1-s)^2}{2+n(1-s)}\leq n(1-s) - 1 + s^n\leq \frac{n^2(1-s)^2}{1+n(1-s)}.\] 
After simplifying, this is equivalent to proving that
\[ \frac{n(1-s)^2 +2n(1-s)}{2+n(1-s)}\geq  1 - s^n\geq \frac{n(1-s)}{1+n(1-s)},\] 
which we can further simplify to
\begin{equation}\label{eqn:series_bound_step} \frac{n(1-s) +2n}{2+n(1-s)}\geq 1 + s + \dots + s^{n-1}\geq \frac{n}{1+n(1-s)}\end{equation}
by dividing by $1-s$ (note that this division can be performed whenever $s<1$, while if $s=1$, then the 
desired inequalities hold trivially). 

Now we address the two desired inequalities separately. For the left-hand inequality
in~\eqref{eqn:series_bound_step}, define 
\[h(s) =  (2+n(1-s))\cdot (s + s^2+ \dots + s^{n-1}) = ns + 2(s +s^2 +\dots+s^{n-1}) - ns^n.\]
We calculate $h(1) = 2(n-1)$, and for any $s\in[0,1]$,
\begin{multline*}h'(s) = n + \sum_{i=1}^{n-1} 2is^{i-1} - n^2s^{n-1} \geq n + \sum_{i=1}^{n-1} 2is^{n-1} - n^2s^{n-1}\\ = n + s^{n-1} \left(  \sum_{i=1}^{n-1} 2i - n^2\right) = n - n s^{n-1}\geq 0,\end{multline*}
where the first inequality holds since $s^{i-1}\geq s^{n-1}$ for all $i=1,\dots,n-1$, and the second inequality holds since $s^{n-1}\leq 1$.
Therefore, $h(s)\leq h(1) = 2(n-1)$ for all $s\in[0,1]$, and so
\begin{multline*}
1 + s + \dots + s^{n-1}
=\frac{(1 + s + \dots + s^{n-1})\cdot (2+n(1-s))}{2+n(1-s)}\\
=\frac{2+n(1-s) + h(s)}{2+n(1-s)}
\leq \frac{2+n(1-s) + 2(n-1)}{2+n(1-s)}
=\frac{n(1-s)+2n}{2+n(1-s)},\end{multline*}
as desired.

To verify the right-hand inequality in~\eqref{eqn:series_bound_step}, we have
\begin{align*}
1 + s + \dots + s^{n-1}
&=\frac{(1 + s + \dots + s^{n-1})\cdot(1+n(1-s))}{1+n(1-s)}\\
&=\frac{(n+1)(1 + s + \dots + s^{n-1}) - n(s + s^2 + \dots + s^n)}{1+n(1-s)}\\
&=\frac{n + (1 + s + \dots + s^{n-1}) - ns^n}{1+n(1-s)}\\
&\geq \frac{n}{1+n(1-s)},
\end{align*}
where the last step holds since, for $s\in[0,1]$, we have $s^i\geq s^n$ for all $i=0,1,\dots,n-1$.

\end{document}